\newtheorem{theorem}{Theorem}[section]
\newtheorem{proposition}[theorem]{Proposition}
\newtheorem{lemma}[theorem]{Lemma}
\newtheorem{corollary}[theorem]{Corollary}
\theoremstyle{definition}
\newtheorem{definition}[theorem]{Definition}
\theoremstyle{remark}
\numberwithin{equation}{section}
\newtheorem{notation}[theorem]{Notation}
\def\bz{{\geqslant}}
\def\lz{{\leqslant}}
\def\hs#1{\hspace{#1mm}}
\def\vs#1{\vskip#1mm}
\def\pxz#1{{\prod\limits_{j = 1}^{#1}z_{_{i j}}}}
\def\pxx#1#2{{\prod\limits_{j = 1}^{#1}x_{_{#2 j}}}}
\def\pxp#1#2{{\prod\limits_{j = 1}^{#1}x'_{_{#2 j}}}}
\def\pyz#1{{\prod\limits_{j = 1}^{#1}z'_{_{i t}}}}
\def\pys#1#2#3{{\prod\limits_{{#1} = 1}^{#2}y_{_{i #3}}}}
\def\pyp#1#2#3{{\prod\limits_{{#1} = 1}^{#2}y'_{_{i #3}}}}
\def\em{{\emptyset}}
\def\r#1#2{{\Re_{#1}^{#2}}}
\def\su{{\ \subseteq \ }}
\def\la{{\lambda_e}}
\def\s#1{{\sum\limits_{i = 1}^{#1}\ }}
\def\f{{\phi_e}}
\def\les{(\lambda^{e}_{\times})^*}
\def\le{\lambda^{e}_{\times}}
\def\ls{\lambda^*_{e}}
\def\as#1{\alpha_{_+}^{#1}}
\def\ap#1{\alpha_{_{\times}}^{#1}}
\def\au#1{\alpha_{_{\cup}}^{#1}}
\begin{document}
	\title{On some strongly regular relations on hyperrings}
	
\author[S. Sh. Mousavi]{S. Sh. Mousavi}
\address[Seyed Shahin Mousavi]{Department of Pure Mathematics, Shahid Bahonar University of Kerman, Kerman, Iran}
\email{smousavi@uk.ac.ir}

	\renewcommand{\baselinestretch}{1.5}
	\oddsidemargin=1.25cm
	\evensidemargin=1.25cm
	\textwidth=15cm
	\topmargin=0cm \textheight=21.6cm
	
	\maketitle

	\begin{abstract}
In this paper first by the fact that the relation $\alpha^{*}$ is the transitive closure of two its subrelations we introduce and analyze a binary relation $\ls $ on a hyperring such that the derived ring is a unitary
ring. Next we introduce and study the notion of
  $\la$-parts in a hyperring and we characterize $\la$-parts
  in a $\la$-strong hyperring $R$. Finally we introduce a new relation $\Lambda^{*}$ such that its derived ring be a unitary
commutative ring.\\
\keywords {hyperring \and strongly regular relation \and complete parts}

\textbf{MSC(2010):} 20N20 
\end{abstract}

\section{Introduction}

A {\it hypergroupoid }  $(H, \circ )$ is a non-empty set $H$
together with a hyperoperation $\circ$ defined on $H$, that is a
mapping of $H\times H$ into the family of non-empty subsets of $H$.
If $(x, y)\in H\times H$, its image under $\circ$ is denoted by
$x\circ y$ and for simplicity by $xy$. If $A$, $B$ are non-empty
subsets of $H$ then $A\circ B$ is given by $A \circ B =\bigcup
\{xy\hspace{1mm}\mid\hspace{1mm}x\in A, y\in B\}$. $x\circ A$ is
used for $\{x\}\circ A$ (respectively $A\circ x$). A hypergroupoid $(H,
\circ)$ is called a {\it hypergroup} in the sense of~\cite{m34} if
for all $x,y,z \in H$ the following two conditions hold: (i) $x(yz)
= (xy)z$, (ii) $xH = Hx = H$, means that for any $x,y \in H$ there
exist $u,v \in H$ such that $y \in xu$  and $y \in vx$. If $(H,
\circ )$ satisfies only the first axiom, then it is called a {\it
semi-hypergroup}. An exhaustive review updated to 1992 of hypergroup
theory appears in~\cite{c93}. A recent book~\cite{cl03} contains a
wealth of applications. If $(H, \circ)$ is a semi-hypergroup (respectively hypergroup) and $R\hs1 \su \hs1 H \times H$ is an
equivalence, we set
$$A \stackrel{=}{R} B \Leftrightarrow a\hs1 R \hs1 b, \hs5 \forall a\in A, \forall b\in B,$$
for all pairs $(A , B)$ of non-empty subsets of $H$.

 The relation
$R$ is called {\it strongly regular on the left} ( {\it on the
right}) if $x \hs1 R \hs1 y \Rightarrow a\circ x \stackrel{=}{R}
a\circ y$ ( $x \hs1 R \hs1 y \Rightarrow x\circ a \stackrel{=}{R}
y\circ a$ respectively), for all $(x, y , a)\in H^3$. Moreover, $R$
is called {\it strongly regular} if it is strongly regular on the
right and on the left.

 If $(H, \circ)$ is a semi-hypergroup (respectively hypergroup) and $R$ is
 strongly regular, then the quotient $H/R$ is a semigroup (respectively group) under the operation:
 $$R(x) \otimes R(y) = R(z), \hs5 \forall z\in x \circ y,$$
 where $R(a)$ is the equivalence class of $a\in H$.

For every $n\in \mathbb{N}$,  the relation
$\beta_n$ define as follows $$\forall (x, y)\in  H^2, x \ \beta  \ y \Longleftrightarrow \exists (z_1, z_2, \cdots , z_n) \in H^n, \{x , y\} \su \prod\limits_{i = 1}^n z_i.$$
 Moreover, one puts $\beta_1 = \{(x , x) \mid x \in H \}$ and $\beta = \bigcup\limits_{n\in \mathbb{N}} \beta_n$. It is known that in every hypergroup $\beta = \beta^*$~\cite{d02}.

 In general, if $R$ is a relation of equivalence on a set $A$, then $\forall S \in P^*(A)$, we shall put $R(S) =\bigcup\limits_{x\in S} R(x)$.

 An element $e$  of a hypergroup $H$ is called an identity if $a\in a \circ e \cap e \circ a$ for all $a\in H$. An element $u$  of a hypergroup $H$ is called a scalar identity if $u \circ x = x \circ u = \{x\}$ for all $x\in H$.

 A {\it hyperring} (see ~\cite{v87}) is a triple $(R
, + , \cdot)$ which satisfies the ring-like axioms in the following
way:(i) $(R, +)$ is a hypergroup , (ii) $(R, \cdot)$ is a
semi-hypergroup, (iii) the multiplication is distributive with
respect to the hyperoperation +. The hyperrings have been studied by
many authors, for example see ~\cite{dl07,ds06,d02,ld09,v87,v94}. In~\cite{v91} Vougiouklis defines the relation
$\Gamma$ on hyperring as follows:
 $x\Gamma y$ if and only if ${x , y}\subseteq u$, where $u$ is a finite sum
 of finite products of elements of $R$, in fact there exist $n,k_{i}
 \in \mathbb{N}$  and $x_{ij} \in R$ such that $u = \sum\limits _{i=1}^{n}\prod\limits
 _{j=1}^{k_{i}}x_{ij}$. He proved that the quotient $R/\Gamma^*$, where
 $\Gamma^*$ is the transitive closure of $\Gamma$, is a ring and also
 $\Gamma^*$ is the smallest equivalent relation on $R$ such that the
 quotient $R/\Gamma^*$ is a fundamental ring. Both $\oplus$ and
 $\odot$ on $R/\Gamma^*$ are defined as follow:

 $$ \forall z\in
\Gamma^*(x) + \Gamma^*(y),\hs2 \Gamma^*(x) \oplus \Gamma^*(y) =
\Gamma^*(z);$$
  $$\forall z \in \Gamma^*(x) \cdot \Gamma^*(y),\hs2 \Gamma^*(x) \odot \Gamma^*(y) =
\Gamma^*(z) .$$
The commutativity in addition in rings can be related with the existence of the unit in multiplication. If $e$ is the unit in a ring, then for all elements $a, b$ we have
$$(a + b)(e + e) = (a + b)e + (a +b)e = a+ b + a +b,$$
$$(a + b)(e + e) = a(e + e) + b(e + e) = a + a + b + b.$$
So $a+ b + a +b = a + a + b + b$ gives $b + a = a + b$. Therefore, when we say $(R, +, .)$ is a hyperring, $(+)$ is not commutative and there is no unit in the multiplication. So The commutativity, as well as the existence of the unit, is not assumed in the fundamental ring. Of course, we know there exist many rings ($+$  is commutative) that don't have unit.

 Davvaz and Vougiouklis (2007) introduced $\alpha$ as a new strongly regular equivalence relation on a hyperring such that the set of quotients is an ordinary commutative ring.  In this paper first we show that the relation $\alpha^{*}$ produce with two subrelations and then we introduce and analyze some new binary relations $\ls $ and $\Lambda^{*}$ on hyperrings such that the derived rings are unitary and unitary commutative. We also investigate $\la$-parts on hyperrings.
  We recall the following definition from Davvaz and Vougiouklis (2007) (see~\cite{dv07}).

 \begin{definition}\label{d:alpha}
 If $R$ is a hyperring, we set
 $$\alpha_0 = \{(x , x) \mid x\in R\}$$
 and, for every integer $n\geq 1$, $\alpha_n$ is the relation defined as
 $$x\alpha_n y \Longleftrightarrow \exists (k_1, k_2, \cdots , k_n)\in \mathbb{N}^n, \exists \sigma \in S_n \hspace{5mm} \text{ and}$$
 $$\hspace{12mm} [\exists (x_{i1}, \cdots , x_{ik_i})\in R^{k_i}, \exists \sigma_i\in S_{k_i}, (i = 1, \cdots , n)] \hspace{3mm} \text{ such that}$$
 $$\hspace{15mm} x\in \s n \pxx {k_i} i  \hs5 \text{ and } \hs5 y\in \s n A_{\sigma(i)},$$
 where $A_i = \prod\limits_{j=1}^{k_i} x_{i \sigma_i(j)}$.

 Obviously, for every $n\geq 1$, the relation $\alpha_n$ are symmetric, and the relation $\alpha = \bigcup\limits_{n\geq 0}\alpha_n$ is reflexive and symmetric.
 \end{definition}

 \begin{theorem}\label{t:ring commutative}(See~\cite{dv07})

  (1) $\alpha^*$ is a strongly regular relation both on $(R , +)$ and $(R , .)$.

  (2) The quotient $R/\alpha^*$ is a commutative ring.

  (3) The relation $\alpha^*$ is the smallest equivalence relation such that the quotient $R/\alpha^*$ is a commutative ring.
 \end{theorem}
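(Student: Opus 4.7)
\noindent\emph{Proof plan.} The plan is to address the three assertions in order, working from the witnessing sum-of-products format that defines $\alpha$.

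For (1), I would first establish the base compatibility of $\alpha$ with each hyperoperation. Suppose $x \alpha y$ via witnesses $(x_{ij})$, $\sigma \in S_n$, $\sigma_i \in S_{k_i}$, and let $a \in R$. Given any $u \in a + x$ and $v \in a + y$, I would produce an enlarged witness of length $n+1$ by appending a new summand with $k_{n+1} = 1$ and $x_{n+1,1} = a$, and extending $\sigma$ to fix the new index; the outer hyperaddition then places $u$ in the first sum and $v$ in the permuted sum, yielding $u \alpha v$. The case $x \alpha^{*} y$ follows by chaining such extensions along a finite witness for $\alpha^{*}$, and right addition is symmetric. Strong regularity under multiplication is analogous: distributivity recasts $a \cdot \bigl(\sum_i \prod_j x_{ij}\bigr)$ as $\sum_i \bigl(a \cdot \prod_j x_{ij}\bigr)$, and one then inserts $a$ as a new first factor in each inner product (so $k_i$ becomes $k_i + 1$), with each $\sigma_i$ extended to fix the new position.

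For (2), the existence of a ring structure on $R/\alpha^{*}$ is automatic from strong regularity via the general quotient construction recalled in the introduction. Commutativity of $\oplus$ reduces to a single-step $\alpha$-relation: taking $n = 2$, $k_1 = k_2 = 1$, $x_{11} = a$, $x_{21} = b$, each $\sigma_i$ the identity, and $\sigma$ the transposition, the definition forces every element of $a + b$ to be $\alpha$-related to every element of $b + a$. Commutativity of $\odot$ is analogous, with $n = 1$, $k_1 = 2$, $\sigma = \mathrm{id}$, and $\sigma_1$ the transposition. For (3), let $\rho$ be any equivalence on $R$ such that $R/\rho$ is a commutative ring. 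I would show $\alpha \subseteq \rho$ by a direct calculation in the quotient: if $x \alpha y$ via witnesses, then projecting the two symmetric sums to $R/\rho$ yields the same element $\sum_i \prod_j \rho(x_{ij})$ on both sides, since reordering summands and reordering factors within each product does not change a value in a commutative ring. Hence $\rho(x) = \rho(y)$, so $\alpha \subseteq \rho$, and $\alpha^{*} \subseteq \rho$ follows by transitivity.

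The main obstacle lies in the multiplicative part of (1): fitting the distributed product $a \cdot \sum_i \prod_j x_{ij}$ into the strict Definition~\ref{d:alpha} format requires verifying that the same outer permutation $\sigma$ continues to pair the enlarged symmetric sum with its rearrangement and that the trivial extensions of the $\sigma_i$ correctly track the newly prepended factor $a$ in both the original and permuted products. Once this bookkeeping is settled, the remaining steps are structural.
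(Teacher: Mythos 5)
Your proposal is correct, and it is the standard argument: witness extension (prepending/appending the element $a$ as a new summand for $+$, or as a new first factor in each inner product for $\cdot$, with $\sigma$ and the $\sigma_i$ extended to fix the new positions) for strong regularity, explicit two-term witnesses for commutativity, and projection to $R/\rho$ for minimality. Note that the paper itself gives no proof of this theorem --- it is quoted from \cite{dv07} --- but your techniques are exactly the ones the paper deploys for its analogous results (Proposition~\ref{p:union}, the commutativity computation for $R/\alpha_{_{\cup}}^{*}$, and the minimality proof for $\lambda^*_{e}$), so the approach is consistent with the paper's; the only nitpick is that for left addition $a+x$ the new summand must be inserted in the first position rather than ``appended'' at index $n+1$.
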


 Note that the relation $\alpha$ in the Definition~\ref{d:alpha} consists of two relations $\alpha_{+}$ and $\alpha_{\times}$ as follows:

 (1)  $ x\ \alpha_{_+} \ y \Leftrightarrow \exists n\in \mathbb{N}, \exists(k_1 , \cdots , k_n)\in \mathbb{N}^n, \exists \sigma \in S_n,$
 $$x\in  \s n \pxx {k_i} i \text { and } y\in \s n A_{\sigma(i)},$$
 where $A_i = \prod\limits_{j = 1}^{k_i} x_{ij}$. In fact, if in Definition~\ref{d:alpha} we set $\sigma_i = id$, then we obtain $\alpha_{+}$.

 (2)  $ x\ \alpha_{\times} \ y \Leftrightarrow \exists n\in \mathbb{N}, \exists(k_1 , \cdots , k_n)\in \mathbb{N}^n, \text{ and } \forall i=1 , \cdots , n$
$$ \exists(x_{i1}, \cdots , x_{ik_i}), \exists \sigma_i \in S_{k_i} \text{ such that } x\in  \s n \pxx {k_i} i \text { and } y\in \s n A_i,$$
 where $A_i = \prod\limits_{j = 1}^{k_i} x_{i \sigma_{i}(j)}$. In fact, if in Definition~\ref{d:alpha} we set $\sigma = id$, then we obtain $\alpha_{\times}$.

 \begin{corollary}\label{c:st regular alph plus}
$\alpha_{_+}^*$ and $\alpha_{_{\times}}^*$ are strongly regular relations both on $(R , +)$ and $(R , \cdot)$.
 \end{corollary}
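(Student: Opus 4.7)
The plan is to mimic the template used by Davvaz and Vougiouklis for $\alpha^{*}$, verifying at each step that the constraints $\sigma_i=\mathrm{id}$ (defining $\alpha_{_+}$) and $\sigma=\mathrm{id}$ (defining $\alpha_{_{\times}}$) are preserved under the natural manipulations required to absorb an extra element $a\in R$ into a sum-of-products decomposition. I would carry out the argument in two stages: first establish a ``one-step'' strong regularity for $\alpha_{_+}$ and $\alpha_{_{\times}}$ themselves, then lift it to the transitive closures via a standard chain argument.

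For the one-step stage, suppose $x\,\alpha_{_+}\,y$ is witnessed by $x\in \sum_{i=1}^{n}\prod_{j=1}^{k_i} x_{ij}$ and $y\in \sum_{i=1}^{n} A_{\sigma(i)}$ with $\sigma\in S_n$ and $A_i=\prod_j x_{ij}$. Given $a\in R$, for additive left-regularity I would prepend a new one-factor product $x_{01}:=a$ and extend $\sigma$ to $\sigma'\in S_{n+1}$ by setting $\sigma'(0)=0$ and $\sigma'(i)=\sigma(i)$ for $i\geq 1$. Associativity of hyperaddition then gives $u\in\sum_{i=0}^{n}\prod_{j}x_{ij}$ and $v\in\sum_{i=0}^{n}A_{\sigma'(i)}$ for every $u\in a+x$ and $v\in a+y$, so $u\,\alpha_{_+}\,v$. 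Multiplicative left-regularity uses distributivity: $a\cdot\sum_i\prod_j x_{ij}=\sum_i\prod_{j'}x'_{ij'}$ with $x'_{i1}=a$ and $x'_{i,j'}=x_{i,j'-1}$ for $j'\geq 2$, while $a\cdot\sum_i A_{\sigma(i)}$ decomposes with the very same sum-level permutation $\sigma$. Neither construction introduces a nontrivial permutation of factors, so the condition $\sigma_i=\mathrm{id}$ characterising $\alpha_{_+}$ is maintained; the right-sided versions are symmetric. The argument for $\alpha_{_{\times}}$ is entirely dual: the summand permutation stays the identity while each $\sigma_i$ is extended by fixing the newly inserted factor position.

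For the transitive-closure stage, given $x\,\alpha_{_+}^{*}\,y$ through a chain $x=z_0,z_1,\dots,z_m=y$ with $z_{\ell}\,\alpha_{_+}\,z_{\ell+1}$, any $u\in a+x$ and $v\in a+y$ can be joined by choosing intermediate elements $w_\ell\in a+z_\ell$ (these hypersums are non-empty by definition of a hyperoperation) and invoking the one-step regularity on each consecutive pair, producing an $\alpha_{_+}$-chain $u,w_1,\dots,w_{m-1},v$ and hence $a+x \stackrel{=}{\alpha_{_+}^{*}} a+y$. The same chain argument covers right addition and both sides of multiplication, and transfers verbatim to $\alpha_{_{\times}}^{*}$.

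The only real obstacle I anticipate is the notational bookkeeping: one must check that the reindexing $(k_i)\mapsto(1,k_1,\dots,k_n)$ in the additive case and $(k_i)\mapsto(k_i+1)$ in the multiplicative case, together with the extended permutations $\sigma'$ and $\sigma'_i$, actually fit the quantifier pattern of Definition~\ref{d:alpha} and keep the relevant block of permutations equal to the identity. Once the indices are organised correctly the rest is mechanical, and in fact many of the needed identities are already implicitly established inside the Davvaz--Vougiouklis proof of Theorem~\ref{t:ring commutative}.
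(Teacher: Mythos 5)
Your argument is correct and is precisely the proof the paper leaves implicit: the corollary is stated without proof because the Davvaz--Vougiouklis strong-regularity argument for $\alpha^{*}$ specializes verbatim to $\alpha_{_+}$ and $\alpha_{_{\times}}$, and your verification that prepending $a$ as a new length-one summand (additive case) or as a new first factor in each product (multiplicative case) preserves the constraints $\sigma_i=\mathrm{id}$, respectively $\sigma=\mathrm{id}$, is exactly the point that needs checking. Your chain argument for passing to the transitive closure is the same device the paper itself uses in Proposition~\ref{p:union}, so nothing further is needed.
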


\begin{proposition}\label{p:union}
Let $(H , \circ)$ be a semi-hypergroup and the strongly regular relations $R$ and $S$ on $H$ be given. Then $(R \cup S)^*$ (the transitive closure of $R \cup S$) is strongly regular.
\end{proposition}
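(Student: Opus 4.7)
The plan is to prove the two halves of strong regularity (left and right) symmetrically, after a quick check that $(R\cup S)^*$ is already an equivalence relation. Reflexivity and symmetry of $R\cup S$ are inherited from $R$ and $S$, and taking the transitive closure preserves both while manufacturing transitivity, so $(R\cup S)^*$ is an equivalence on $H$.

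For strong regularity on the left, fix $a \in H$ and suppose $x\,(R\cup S)^*\,y$. By the definition of transitive closure there is a chain $x = z_0, z_1, \ldots, z_n = y$ with each consecutive pair $(z_i,z_{i+1})$ lying in $R\cup S$. On each step one of $R$ or $S$ holds between $z_i$ and $z_{i+1}$, so by the strong regularity of whichever relation is applicable, $a\circ z_i \stackrel{=}{R} a\circ z_{i+1}$ or $a\circ z_i \stackrel{=}{S} a\circ z_{i+1}$. In either case, for any $p \in a\circ z_i$ and any $q \in a\circ z_{i+1}$ we obtain $p\,(R\cup S)\,q$, hence $p\,(R\cup S)^*\,q$.

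To deduce $a\circ x \stackrel{=}{(R\cup S)^*} a\circ y$, I need that \emph{every} $p \in a\circ x$ and \emph{every} $q \in a\circ y$ are $(R\cup S)^*$-related, not just matched members along the chain. This I will show by a short induction on $i$, claiming that every $p \in a\circ z_0$ and every $r \in a\circ z_i$ satisfy $p\,(R\cup S)^*\,r$. The base case $i=0$ uses $z_0\,R\,z_0$ together with the strong regularity of $R$ to conclude $a\circ z_0 \stackrel{=}{R} a\circ z_0$, so any two elements of $a\circ z_0$ are in fact $R$-related. The inductive step glues the hypothesis at stage $i$ with the single-step observation of the previous paragraph via transitivity of $(R\cup S)^*$. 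Setting $i=n$ finishes the left case, and the right case is entirely symmetric.

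The only subtlety, minor as it is, lies in that final induction: the definition of $\stackrel{=}{(R\cup S)^*}$ demands the relation between \emph{arbitrary} pairs of representatives, not merely between elements arising from a particular choice along the chain. It is resolved by noting that strong regularity of $R$ alone forces $a\circ z$ to collapse inside a single $R$-class for each fixed $z$, which bootstraps the induction and lets the chain-by-chain argument run through.
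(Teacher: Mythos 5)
Your proof is correct and follows essentially the same route as the paper's: decompose the $(R\cup S)^*$-chain into single $R\cup S$ steps, apply strong regularity of whichever of $R$, $S$ holds at each step, and connect arbitrary elements of $a\circ x$ and $a\circ y$ by choosing representatives in the intermediate hyperproducts $a\circ z_i$, which are non-empty by the definition of a hyperoperation. Your closing induction is just a more explicit rendering of the paper's step of picking $l_i\in a\circ z_i$ and chaining $s\,(R\cup S)\,l_1\,(R\cup S)\,\cdots\,(R\cup S)\,l_n\,(R\cup S)\,t$.
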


\begin{proof}
Suppose that $x$ and $y$ in $H$ such that $x(R \cup S)^* y$ are given. We show that for all $a\in H$, $a \circ x  \overline{ \overline{(R \cup S)^*}} a \circ y$. For this reason let  the arbitrary elements $s\in a \circ x$ and $t\in a \circ y$ be given. By $x(R \cup S)^* y$, there exist $z_1, \cdots , z_n$ in $H$ such that
$$x (R \cup S) z_1 (R \cup S) z_2 \cdots (R \cup S)  z_n (R \cup S) y.$$
Since $R$ and $S$ are strongly regular, we have
$$a \circ x \overline{ \overline{(R \cup S)}} a \circ z_1 \overline{ \overline{(R \cup S)}} a \circ z_2 \cdots \overline{ \overline{(R \cup S)}} a \circ z_n \overline{ \overline{(R \cup S)}} a \circ y.$$
Therefore for all $i\in \{1, \cdots , n \}$ and $l_i \in a \circ z_i$, we have
$$s  (R \cup S) l_1  (R \cup S) l_2 \cdots  (R \cup S) l_n  (R \cup S) t.$$
So $s  (R \cup S)^* t$ and hence $ (R \cup S)^*$ is strongly regular to the left. Similarly we can show that  $ (R \cup S)^*$ is strongly regular to the right and the proof is complete.
\end{proof}
In the following we show that $\ap{}$ and $\as{}$ have the important role and we can produce $\alpha^{*}$ with them.
\begin{definition}\label{d:alpha union}
 Suppose that $R$ is a hyperring. Define relation $\au{}$ as follows:
$$ \au{}  = \ap{} \ \cup \ \as{}.$$
\end{definition}

 \begin{proposition}
 The quotient $R/ \au *$ is a commutative ring.
 \end{proposition}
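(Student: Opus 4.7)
The plan is to reduce the claim to Theorem~\ref{t:ring commutative}(2) by proving the set-theoretic equality $\au{*} = \alpha^{*}$. Once this is established, $R/\au{*}$ coincides with $R/\alpha^{*}$ and inherits its commutative ring structure automatically, so nothing further needs to be verified.

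The inclusion $\au{*}\subseteq \alpha^{*}$ is the easy direction: the two characterisations given just after Theorem~\ref{t:ring commutative} show $\ap{}\subseteq \alpha$ and $\as{}\subseteq \alpha$, since each is obtained from Definition~\ref{d:alpha} by specialising part of the permutation data; hence $\au{}\subseteq \alpha$, and passing to transitive closures preserves the inclusion. For the reverse inclusion, the key observation is that every single $\alpha$-step factors as an $\ap{}$-step composed with an $\as{}$-step. Given $x \alpha y$ witnessed by data $n$, $(k_i)$, $\sigma \in S_n$, $\sigma_i \in S_{k_i}$ and $x_{ij}\in R$ with $x\in \sum_{i=1}^n \prod_{j=1}^{k_i} x_{ij}$ and $y\in \sum_{i=1}^n A_{\sigma(i)}$ where $A_i = \prod_{j=1}^{k_i} x_{i\sigma_i(j)}$, pick any intermediate element $z\in \sum_{i=1}^n A_i$ (non-empty because $R$ is a hyperring). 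Using the original data with $\sigma = \mathrm{id}$ presents $x$ and $z$ in the shape required by $\ap{}$, so $x\,\ap{}\,z$. Re-indexing via $y_{ij} := x_{i\sigma_i(j)}$, one has $\prod_{j=1}^{k_i} y_{ij} = A_i$, so $z\in \sum_{i=1}^n \prod_{j=1}^{k_i} y_{ij}$ and $y\in \sum_{i=1}^n \prod_{j=1}^{k_i} y_{\sigma(i) j}$; this is precisely a witness of $z\,\as{}\,y$ with the same outer permutation $\sigma$. Hence $x\,\au{}\,z\,\au{}\,y$ and so $x\,\au{*}\,y$, which shows $\alpha\subseteq \au{*}$, and transitivity of $\au{*}$ then gives $\alpha^{*}\subseteq \au{*}$.

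The only delicate point in the argument is the re-indexing step: one must verify that after setting $y_{ij} = x_{i\sigma_i(j)}$ the elements $z$ and $y$ are genuinely of the shape that the definition of $\as{}$ requires, but this is a routine bookkeeping exercise. An alternative, more roundabout route would invoke Corollary~\ref{c:st regular alph plus} together with Proposition~\ref{p:union} applied to $\as{*}$ and $\ap{*}$ to obtain strong regularity of $\au{*}$ on both hyperoperations directly, and then derive commutativity from the inclusion $\as{*}\subseteq \au{*}$; the reduction to $\alpha^{*}$ above is however shorter and makes the whole content of the proposition transparent.
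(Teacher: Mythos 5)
Your proof is correct, but it takes a genuinely different route from the paper's. The paper argues directly: it defines $\oplus$ and $\otimes$ on $R/\alpha_{\cup}^{*}$, establishes well-definedness by combining Corollary~\ref{c:st regular alph plus} with Proposition~\ref{p:union} to get strong regularity of $\alpha_{\cup}^{*}$ on both hyperoperations, and then verifies commutativity of each operation by exhibiting explicit witnesses (an $\alpha_{+}$-witness with $\sigma=(1\ 2)$ for addition, an $\alpha_{\times}$-witness with $\sigma_{1}=(1\ 2)$ for multiplication). You instead prove the set-theoretic identity $\alpha_{\cup}^{*}=\alpha^{*}$ and quote Theorem~\ref{t:ring commutative}(2). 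Your key factorization is sound: given an $\alpha$-step witnessed by $\sigma$ and the $\sigma_{i}$, any $z\in\sum_{i=1}^{n}A_{i}$ (non-empty since hyperoperations take values in non-empty sets) satisfies $x\ \alpha_{\times}\ z$ (outer permutation trivial) and $z\ \alpha_{+}\ y$ (after relabelling $y_{ij}=x_{i\sigma_{i}(j)}$, with outer permutation $\sigma$), so $\alpha\subseteq\alpha_{\cup}^{*}$; the reverse inclusion is immediate. Note that this is \emph{not} circular, since the paper only derives $\alpha^{*}=\alpha_{\cup}^{*}$ afterwards (its Corollary) by invoking the minimality statement Theorem~\ref{t:ring commutative}(3) together with the present proposition, whereas you obtain the equality by a direct decomposition. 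What each approach buys: the paper's argument is self-contained modulo strong regularity and does not need the factorization lemma; yours is shorter, explains structurally why the union of the two subrelations generates all of $\alpha^{*}$, and subsumes the subsequent corollary without appealing to minimality. Two cosmetic points: in the $\alpha_{+}$-step the product bound should be $k_{\sigma(i)}$ rather than $k_{i}$, and your parenthetical "alternative route" would need $\alpha_{\times}\subseteq\alpha_{\cup}$ as well as $\alpha_{+}\subseteq\alpha_{\cup}$ to get commutativity of multiplication, but neither affects the main argument.
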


 \begin{proof}
 We define $\oplus$ and $\otimes$ on  $R/ \au *$ as follows:
 $$\au{*}(a) \oplus \au{*}(b) = \au{*}(c), \hs2 c\in \au{*}(a) + \au{*}(b),$$
 $$\au{*}(a) \otimes \au{*}(b) = \au{*}(c), \hs2 c\in \au{*}(a) \cdot \au{*}(b).$$
 By Proposition~\ref{p:union} and Corollary~\ref{c:st regular alph plus} we have $\au{*}$ is a strongly regular both on $(R , +)$ and $(R , \cdot)$. Therefore  $\oplus$ and $\otimes$ are well defined. The associativity and distributivity on $R$ guarantee that the associativity and distributivity are valid for  $R/ \au *$. Let $x\in x_1 + x_2$ and $y\in x_2 + x_1$ be given. Put $n = 2$, $x_{11} = x_1$, $x_{21} = x_2$, $k_1 = k_2 = 1$ and $\sigma = (1 \hs2 2) \in S_2$. We have
 $$\s 2 \pxx {k_i} i = x_{11} + x_{12} = x_1 + x_2 \hs2 \text{and} \hs2 \s 2 \pxx {k_{\sigma(i)}} {\sigma(i)} = x_{_{\sigma(1) 1}} + x_{_{\sigma(2) 1}} = x_2 + x_1.$$
 Therefore $x\in \s 2 \pxx {k_i} i$ and $y\in \s 2 \pxx {k_{\sigma(i)}} {\sigma(i)}$. Thus $x\ \as \ y$ and hence $x \ \au{*} \ y$. So
 $$\au{*}(x_1) \oplus \au{*}(x_2) = \au{*}(x) = \au{*}(y) = \au{*}(x_2) \oplus \au{*}(x_1).$$
 Let $a\in x_1 \cdot x_2$ and $b\in x_2 \cdot x_1$ be given. Put $n =1$, $x_{11} = x_1$, $x_{12} = x_2$, $k_1 = 2$, $\sigma_1 = (1 \hs2 2)\in S_2$. We have
 $$\s 1 \pxx{k_1} i = x_{11} \cdot x_{12} = x_1 \cdot x_2 \hs2 \text{and} \hs2 \s 1 \prod\limits_{j=1}^{k_1}x_{_{i \sigma_i(j)}} = x_{12} \cdot x_{11} = x_2 \cdot x_1.$$
 Thus $a \in \s 1 \pxx{k_1} i$ and $b\in \s 1 \prod\limits_{j=1}^{k_1}x_{_{i \sigma_i(j)}}$. Therefore $a\ \ap \ b$ and hence $a \ \au{*} \ b$. So
 $$\au{*}(x_1) \otimes \au{*}(x_2) = \au{*}(a) = \au{*}(b) = \au{*}(x_2) \otimes \au{*}(x_1).$$
 Therefore $R/ \au *$ is a commutative ring.
 \end{proof}

 \begin{corollary}
 If $R$ is a hyperring, then $\alpha^* = \au{*}=(\ap{} \ \cup \ \as{})^{*}$.
 \end{corollary}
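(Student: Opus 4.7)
The plan is to establish the equality by proving the two inclusions $\au{*} \subseteq \alpha^*$ and $\alpha^* \subseteq \au{*}$, with the first being essentially immediate from the definitions and the second admitting two routes: a short one via the minimality clause of Theorem~\ref{t:ring commutative}, and a direct constructive one that exposes how an $\alpha$-step factors as an $\alpha_\times$-step followed by an $\alpha_+$-step.

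For the inclusion $\au{*} \subseteq \alpha^*$, I would simply note that $\alpha_+$ arises from Definition~\ref{d:alpha} by specializing $\sigma_i = \mathrm{id}$ for all $i$, while $\alpha_\times$ arises by specializing $\sigma = \mathrm{id}$. In either case the witnesses for $\alpha_+$ or $\alpha_\times$ are also witnesses for $\alpha$, so $\alpha_+ \cup \alpha_\times \subseteq \alpha$, and taking transitive closures gives $\au{*} \subseteq \alpha^*$.

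For the reverse inclusion, the cleanest route is to apply the previous proposition together with part (3) of Theorem~\ref{t:ring commutative}: since $R/\au{*}$ has been shown to be a commutative ring and $\alpha^*$ is the smallest equivalence relation on $R$ whose quotient is a commutative ring, it follows that $\alpha^* \subseteq \au{*}$. If instead one prefers a direct argument, I would take $x\,\alpha\,y$ with witnesses $x \in \sum_{i=1}^n \prod_{j=1}^{k_i} x_{ij}$ and $y \in \sum_{i=1}^n A_{\sigma(i)}$ where $A_i = \prod_{j=1}^{k_i} x_{i\sigma_i(j)}$, pick any $z \in \sum_{i=1}^n A_i$ (non-empty since this is a hyperring sum of hyperproducts), and observe that the witnesses $(\sigma_i)_i$ with $\sigma = \mathrm{id}$ give $x\,\alpha_\times\,z$, while setting $y_{ij} = x_{i\sigma_i(j)}$ and using $\sigma$ gives $z\,\alpha_+\,y$; hence $x\,(\alpha_+\cup\alpha_\times)^2\,y$ and so $\alpha \subseteq \au{*}$, whence $\alpha^* \subseteq \au{*}$.

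The only step that requires care is the bookkeeping in the direct factorization: one must verify that the intermediate element $z$ really does witness both an $\alpha_\times$-relation with $x$ and an $\alpha_+$-relation with $y$ when the indices $A_i$ are relabelled as new product terms $\prod_j y_{ij}$. The minimality route avoids this bookkeeping entirely, which is why I would present it as the main argument and mention the direct decomposition as an illustrative remark.
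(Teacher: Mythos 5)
Your proof is correct, and your main argument is exactly the paper's: the inclusion $\alpha_{_+}\cup\alpha_{_{\times}}\subseteq\alpha$ gives $(\alpha_{_+}\cup\alpha_{_{\times}})^*\subseteq\alpha^*$, while the preceding proposition together with the minimality clause of Theorem~\ref{t:ring commutative}(3) gives the reverse inclusion. Your supplementary direct factorization through an intermediate $z\in\sum_{i=1}^{n}A_i$ is also sound (and yields the slightly sharper observation that every $\alpha$-step decomposes as an $\alpha_{_{\times}}$-step followed by an $\alpha_{_+}$-step), but it is not needed for the corollary.
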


 \begin{proof}
 Since $\au{}\su \alpha$, $\au{*} \su \alpha^*$. By Theorem~\ref{t:ring commutative}(3), we get $\alpha^* \su \au{*}$. Therefore  $\alpha^* = \au{*}$.
 \end{proof}

 \begin{definition} (See~\cite{mad})
 A hyperring $R$ is said to be $n$-complete if for all $(k_1 , \cdots , k_n)\in \mathbb{N}^{n}$ and for all $i = 1 , \cdots , n$ and $(x_{i1} , \cdots , x_{i k_i})\in R^{k_i}$, then
 $$\Gamma\Big{(} \s n \pxx {k_i} i \Big{)} = \s n \pxx {k_i} i .$$
 \end{definition}

\section{The relation $\ls$}
 In this section by replacement a suitable relation $\le$ instead of $\alpha_{\times}$ in the Definition~\ref{d:alpha union}  we introduce the relation $\la$ to determine a new characterization of the derived hyperring.

\begin{definition}\label{d:cond-p}
 Suppose that $(R , + , .)$ is a hyperring and $e\in R$. We say that the pair $\big{(}\s m \ \pxx {k_i} i , \s m \ \pys t {k'_i} t\big{)}$ satisfying in
  condition $\mathfrak{P_{e}}$ whenever one of the following occur

  (1) $\forall i\in \{1, \cdots , m\}$, $k_i = k'_i$ and $\forall j\in \{1 , \cdots , k_i\}$, $x_{i j} = y_{i j}$;

  (2) there exist $i_1 , \cdots i_d\in\{1 , \cdots  ,m\}$ such that

   \textbullet \hs1  $\forall 1 \lz i \lz m$ such that $i \notin \{i_1 , \cdots , i_d\}$ and $\forall 1 \lz t \lz k'_i$  we have $k_i = k'_i$ and $x_{_{i t}} = y_{_{i t}}$;

   \textbullet \hs1  $\forall 1 \lz j \lz d$ there exist $p_{i_j}$ and $l_{i_j}$ in $\mathbb{N}$ such that $1 \lz p_{i_j} \lz k'_{i_j}$, $p_{i_j} \lz l_{i_j} \lz k'_{i_j}$ and $k'_{i_j} = k_{i_j} + (l_{i_j} - p_{i_j} + 1)$

 \textbullet \hs1  $\forall 1 \lz j \lz d$ we have

  \begin{equation}
y_{_{{i_j} t}} :\stackrel{\text{\tiny def}}{=} \begin{cases}
 x_{_{{i_j} t}}, &\text{if $1 \lz  t  < p_{i_j}$;}\\
 e,  &\text{if $p_{i_j} \lz t \lz l_{i_j}$;}\\
 x_{_{{i_j} (t + p_{i_j} -l_{i_j} - 1)}}, &\text{if $l_{i_j}  < t  \lz k'_{i_j}$.}
 \end{cases}\label{eq:p1}
 \end{equation}

\end{definition}

\begin{definition}\label{d:id}

 Suppose that the hyperring $R$ and $e\in R$ are given.
 For all $m \bz 1$, define:
  $$\r {m} e := \Big{\{}\big{(}\s m \ \pxx {k_i} i , \s m \ \pys t {k'_i} t\big{)}, \big{(}\s m \ \pys t {k'_i} t , \s m \ \pxx {k_i} i\big{)}  \mid \text{ the pair }$$
   $$\big{(}\s m \ \pxx {k_i} i , \s m \ \pys t {k'_i} t\big{)} \text{ satisfying in the condition } \mathfrak{P_{e}}\Big{\}};$$
and $\r{}e := \bigcup\limits_{m\bz 1} \r {m}e$.
\end{definition}

\begin{definition}

If $(R , + , \cdot)$ is a hyperring, then for every integer $m\geq 1$ we set
 $$x \hs1 (\le)_m \hs1 y \Leftrightarrow \exists (A , B)\in \r{m}e\ \  \text{such that} \hs1 x\in A\ \ \text{and} \hs1  y\in B.$$
 The relation $\le = \bigcup\limits_{m \geq 1} (\le)_m$ is reflexive and transitive.
\end{definition}

 Let $\les$ be the transitive closure of $\le$. Then we have the following.

\begin{proposition}\label{p:order}
If $R$ is a hyperring, then $(\le)_m \su (\le)_{m+1}$.
\end{proposition}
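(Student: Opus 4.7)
The plan is to promote a witness for $x\,(\le)_m\,y$ into a witness for $x\,(\le)_{m+1}\,y$ by appending a common $(m+1)$-th summand to both sides of the pair, so that the $\mathfrak{P}_e$-relation of the first $m$ summands is transferred verbatim while the newly added summand matches factor-by-factor.

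Concretely, I would first assume $x\,(\le)_m\,y$ and pick a witness $(A,B)\in\r{m}e$ with $x\in A$ and $y\in B$, writing $A=\sum_{i=1}^{m}\prod_{j=1}^{k_i}x_{ij}$ and $B=\sum_{i=1}^{m}\prod_{t=1}^{k'_i}y_{it}$; the case in which the pair appears in $\r{m}e$ in the reversed form would be treated symmetrically. By hypothesis this pair satisfies $\mathfrak{P}_e$. Next, I would form
\begin{equation*}
A'=\sum_{i=1}^{m+1}\prod_{j=1}^{k_i}x'_{ij},\qquad B'=\sum_{i=1}^{m+1}\prod_{t=1}^{k'_i}y'_{it},
\end{equation*}
keeping the first $m$ summands identical to those in $A$ and $B$ respectively, and taking the $(m+1)$-th summand to be a single-factor product $z\in R$ on both sides, i.e.\ $k_{m+1}=k'_{m+1}=1$ with $x'_{m+1,1}=y'_{m+1,1}=z$. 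Verifying that $(A',B')\in\r{m+1}e$ should be routine: the same index set $\{i_1,\ldots,i_d\}$ (if any) that witnesses $\mathfrak{P}_e$ for $(A,B)$ still does so for $(A',B')$, since the first $m$ summands are unchanged and the $(m+1)$-th summands coincide factor-by-factor and therefore need no entry in $\{i_1,\ldots,i_d\}$. In particular, if $(A,B)$ satisfies condition~(1) of $\mathfrak{P}_e$, so does $(A',B')$.

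The heart of the argument, and the main obstacle, will be choosing the element $z$ so that $x\in A'=A+z$ and $y\in B'=B+z$ hold simultaneously. Here I would exploit the hypergroup structure of $(R,+)$: by reproducibility, for every $a\in A$ the set $\{z\in R : x\in a+z\}$ is non-empty, and likewise for $B$ and $y$. A single $z$ lying in the intersection $\bigl(\bigcup_{a\in A}\{z:x\in a+z\}\bigr)\cap\bigl(\bigcup_{b\in B}\{z:y\in b+z\}\bigr)$ is what is needed; if no single-factor element directly works, one has the flexibility to take the $(m+1)$-th summand to be a longer product $T=\prod_{j=1}^{k_{m+1}}z_j$ — still identical on both sides so that $\mathfrak{P}_e$ continues to pass to $(A',B')$ — which enlarges the pool of candidates as a subset of $R$ and should suffice to accommodate $x$ and $y$ at the same time. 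Once such a common $(m+1)$-th summand is exhibited, the inclusion $(\le)_m\subseteq(\le)_{m+1}$ follows.
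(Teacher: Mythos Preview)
Your plan to append a common $(m{+}1)$-th summand to both sides does preserve the $\mathfrak{P}_e$ condition, as you correctly observe, but the step you yourself flag as ``the main obstacle'' is a genuine gap that your outline does not close. You need a \emph{single} element $z$ (or hyperproduct $T\subseteq R$) satisfying $x\in A+z$ and $y\in B+z$ simultaneously. Reproducibility in $(R,+)$ only tells you that for each fixed $a\in A$ the set $\{z:x\in a+z\}$ is non-empty, and similarly on the $B$ side; it gives no reason whatsoever for these families to share a common element. In a general hypergroup there need not be a two-sided identity, so there is no canonical $z$ with $x\in x+z$ and $y\in y+z$ for arbitrary $x,y$, and nothing in your sketch produces one. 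Allowing $T$ to be a longer product does not help either: $T$ is still just some subset of $R$, and the same simultaneous-containment constraint remains. Saying that the enlarged pool ``should suffice'' is not an argument.

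The paper sidesteps this difficulty by going in the opposite direction: rather than \emph{adding} a summand, it \emph{splits} the last one. Using reproducibility one picks $u,w\in R$ with $x_{m k_m}\in u+w$; distributivity then gives
\[
\sum_{i=1}^{m}\prod_{j=1}^{k_i}x_{ij}\ \subseteq\ \sum_{i=1}^{m-1}\prod_{j=1}^{k_i}x_{ij}\;+\;x_{m1}\cdots x_{m(k_m-1)}u\;+\;x_{m1}\cdots x_{m(k_m-1)}w,
\]
so $x$ already lies in an $(m{+}1)$-term sum built from the same data. The analogous split on the $B$ side yields an $(m{+}1)$-term sum containing $y$, and one checks that the new pair still lies in $\r{m+1}{e}$. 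The key advantage is that splitting requires only one independent application of reproducibility per side, with no compatibility condition linking the two choices; your approach, by contrast, demands a single $z$ that works on both sides at once, and that is exactly what you have not established.
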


\begin{proof}
If $x \hs1 (\le)_m \hs1 y$, then there exists $(A , B) \in \r{m} e$ such that $x\in A$ and $y\in B$. Without lose the generality suppose that $A = \s m \pxx {k_i} i$ and $y = \s m \pys t {k'_i} t$. Since $(R , +)$ is a hypergroup, so there exist $u, w \in R$ such that $x_{m k_m} = u + w$. Therefore
$$\s m \pxx {k_i} i = (x_{1 1} \cdots x_{1 k_1}) + \cdots + (x_{(m-1) 1} \cdots x_{(m-1) k_{(m-1)}}) + $$
$$(x_{m 1} \cdots x_{m (k_m-1)}u) + (x_{m 1} \cdots x_{m (k_m -1)}w).$$
Let $k_{m+1} :\stackrel{\text{\tiny def}}{=} k_m$ and
$$x'_{i j} :\stackrel{\text{\tiny def}}{=}
\begin{cases}
 x_{i j}, &\text{ if $1 \lz i \lz m-1 , 1\lz j \lz k_i$;}\\
 x_{m j},  &\text{ if $i =m , 1 \lz j \lz k_m-1$;}\\
 u, &\text{ if $i =m, j = k_m$;}\\
 x_{m j}, &\text{ if $i =m+1 , 1 \lz j \lz k_{(m+1)}-1$;}\\
 w, &\text{ if $i =m+1, j = k_{m+1}$.}
  \end{cases}$$
 Also define $y'_{it}$ from $y_{it}$ as above. So
 $$ x\in \s {m+1} \pxp {k_i} i  \hs2 \text{ and } \hs2 y \in \s {m+1} \pyp t {k'_i} t,$$
 where $\Big{(}\s {m+1} \pxp {k_i} i , \s {m+1} \pyp t {k'_i} t \Big{)}\in \r {m+1} e$. Therefore $x \hs1 (\le)_{m + 1} \hs1 y$.
 \end{proof}

\begin{lemma}\label{l:str-lam pro}
$\les$ is a strongly regular relation both on $(R , +)$ and $(R , \cdot)$.
\end{lemma}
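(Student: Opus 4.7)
The plan is to verify that $\le$ itself is strongly regular on both $(R,+)$ and $(R,\cdot)$; the lemma then follows by applying Proposition~\ref{p:union} with both relations taken equal to $\le$, whose union is just $\le$ and whose transitive closure is $\les$. So the real content is checking strong regularity of $\le$, which I would do by producing an explicit witness in some $\r{m'}{e}$ starting from a witness for $x\,\le\,y$.

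For the additive direction, suppose $x\,\le\,y$ is witnessed by $(A,B)\in\r{m}{e}$ with $A=\s{m}\pxx{k_{i}}{i}$, $B=\s{m}\pys{t}{k'_{i}}{t}$, $x\in A$, $y\in B$. Let $a\in R$ and pick arbitrary $s\in a+x$, $t\in a+y$. By associativity of the hyperaddition, $a+A$ and $a+B$ can be rewritten as sums of products with one more summand, obtained by appending a new $(m{+}1)$-st block consisting of the length-one product $a$ to each of $A$ and $B$. The resulting pair $(A',B')$ still satisfies $\mathfrak{P}_{e}$: the appended block is identical on both sides, so it joins the set of unchanged indices, while the original insertion data $(i_{j},p_{i_{j}},l_{i_{j}})$ for $j=1,\dots,d$ transfers without modification. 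Thus $(A',B')\in\r{m+1}{e}$ with $s\in A'$ and $t\in B'$, giving $s\,(\le)_{m+1}\,t$.

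For the multiplicative direction, with the same $(A,B)\in\r{m}{e}$ and $s\in a\cdot x$, $t\in a\cdot y$, left-distributivity yields
\[
a\cdot A=\sum_{i=1}^{m}a\cdot\prod_{j=1}^{k_{i}}x_{ij}=\sum_{i=1}^{m}\prod_{j=1}^{k_{i}+1}x''_{ij},
\]
where $x''_{i1}:=a$ and $x''_{ij}:=x_{i(j-1)}$ for $j\geq 2$; the same reindexing applies to $a\cdot B$. The new pair still satisfies $\mathfrak{P}_{e}$ after shifting the insertion parameters by one, namely $p''_{i_{j}}:=p_{i_{j}}+1$ and $l''_{i_{j}}:=l_{i_{j}}+1$: the prepended $a$'s match in every block, the blocks outside $\{i_{1},\dots,i_{d}\}$ remain unchanged, and in each changed block the $e$-insertion pattern is merely translated by one position, so the length equation $k''_{i_{j}}=(k_{i_{j}}+1)+(l''_{i_{j}}-p''_{i_{j}}+1)$ continues to hold. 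The right-sided versions $x+a$ and $x\cdot a$ are handled symmetrically by appending rather than prepending, using right-distributivity.

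The main obstacle is the combinatorial bookkeeping in verifying $\mathfrak{P}_{e}$ for the shifted pair: one has to track carefully how the insertion windows, the block lengths and the matching of non-inserted entries interact with the prepended or appended factor or summand, and in particular check that the shifted window $[p''_{i_{j}},l''_{i_{j}}]$ still lies in the legal range of the new block. Once this shift-compatibility is written out, strong regularity of $\le$ is immediate, and the lemma follows from Proposition~\ref{p:union}.
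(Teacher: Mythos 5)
Your proposal is correct and follows essentially the same route as the paper: one shows $\le$ itself is strongly regular by adjoining $a$ as an extra length-one summand block (for $+$) or as an extra factor in every product (for $\cdot$), checks that condition $\mathfrak{P}_{e}$ survives, and then passes to the transitive closure $\les$. The only cosmetic differences are that the paper treats the right-sided multiplication (appending, so no shift of the insertion windows is needed) and says ``similarly'' for the left, whereas you work out the prepending case with the explicit window shift, and that you invoke Proposition~\ref{p:union} where the paper runs the same chain argument implicitly.
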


 \begin{proof}
We can see that $\les$ is an equivalence relation. In order to prove that it is strongly regular, we show that if $ x\ \le \ y $ then  $$ \begin{cases}
x + a\ \overline{\overline{\le}} \ y + a, & a + x\ \overline{\overline{\le}} \ a + y;\\
 x \cdot a\ \overline{\overline{\le}} \ y \cdot a,  & a \cdot x\ \overline{\overline{\le}} \ a \cdot y.
 \end{cases}$$
for every $a\in R$. If $x\ \le \ y$, then there exists $(A
, B)\in \r{}e$ such that $x\in A$ and $y\in B$. So there exists $m
\bz 1$ such that $A = \s m \ \pxx {k_i} i$, $B = \s m \ \pys t
{k'_i} t$ and the pair $(A, B)$ satisfying in the condition $\mathfrak{P_{e}}$.
Thus
  $$ x + a \su \big{(}\s m \ \pxx {k_i} i\big{)} + a \hs2\text{ and } \hs2 y + a \su \big{(} \s m \ \pys t {k'_i} t \big{)} + a.$$

 Now, let $k_{m + 1} = 1,\ k'_{m + 1} = 1,\ x_{_{(m + 1)1}} = a,\ y_{_{(m + 1) 1}} = a$. So
$$ x + a \su \s {m + 1} \ \pxx {k_i} i \hs2\text{ and } \hs2 y + a \su  \s {m + 1} \ \pys t {k'_i} t.$$
Therefore $\big{(}\s {m + 1 } \ \pxx {k_i} i , \s { m + 1} \ \pys t
{k'_i} t \big{)} \in \r{m + 1} e$. This implies that $(A + a, B + a)
\in \r{}e$.

 Therefore for all $u \in x + a$ and $v \in y + a$, we have $u\ \le \ v$. Thus $x + a\ \overline{\overline{\le}} \ y + a$. Similarly we can show that
  $a + x\ \overline{\overline{\le}} \ a + y$.

 Now we prove that $\les$ is a strongly regular relation on $(R , \cdot)$.

If $x\ \le \ y$, then there exists $(A , B)\in \r{}{e}$ such that $x\in A$ and $y\in B$. Therefore there exists $m \bz 1$ such that $A = \s m \ \pxx {k_i} i$,
$B = \s m \ \pys t {k'_i} t$ and the pair $\big{(}\s m \ \pxx {k_i} i , \s m \ \pys t {k'_i} t\big{)}$ satisfying in the condition $\mathfrak{P_{e}}$. Thus
$$ x \cdot a \su \big{(}\s m \ \pxx {k_i} i\big{)} \cdot a \hs2\text{ and } \hs2 y \cdot a \su \big{(} \s m \ \pys t {k'_i} t \big{)} \cdot a.$$

 Now let for all $1 \lz i \lz m$, $t_i = k_i + 1, t'_i = k'_i + 1$, $x_{i t_i} = a$ and $y_{i t'_i} = a$. So
$$ x \cdot a \su \s {m} \ \pxx {t_i} i \hs2\text{ and } \hs2 y \cdot a \su  \s {m} \ \pys t {t'_i} t,$$
and $\big{(}\s {m} \ \pxx {t_i} i , \s {m} \ \pys t {t'_i} t \big{)}
\in \r{m} e$. This implies that $(A \cdot a, B \cdot a) \in \r{}e$.
 Therefore for all $u \in x \cdot a$ and $v \in y \cdot a$, we have
 $u\ \le \ v$. Thus $x \cdot a\
 \overline{\overline{\le}} \ y \cdot a$. Similarly we can
show that $a \cdot x\ \overline{\overline{\le}} \ a . y$.

\end{proof}

\begin{definition}

 Suppose that $R$ is a hyperring. Define relation $\la$ as follows:
$$ \la  = \le \ \cup \ \alpha_{_+}.$$
\end{definition}

  Obviously, the relation $\la$ is reflexive and symmetric. Let $\ls$ be the transitive closure of $\la$. We have the following proposition.

\begin{proposition}\label{p:ring}

$\ls$ is a strongly regular relation both on $(R , +)$ and $(R , \cdot)$.
\end{proposition}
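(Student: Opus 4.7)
The plan is to reduce the statement directly to Proposition~\ref{p:union} by rewriting $\ls$ as the transitive closure of the union of two relations that are already known to be strongly regular.

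First I would establish the identity
\[
\ls \;=\; \bigl(\les \cup \alpha_{_+}^{*}\bigr)^{*}.
\]
The inclusion $\subseteq$ comes from $\le \subseteq \les$ and $\alpha_{_+} \subseteq \alpha_{_+}^{*}$, which give $\la \subseteq \les \cup \alpha_{_+}^{*}$ and hence $\ls = \la^{*} \subseteq (\les \cup \alpha_{_+}^{*})^{*}$. For the reverse inclusion I would observe that $\les$ is by definition the transitive closure of $\le$ and thus $\les \subseteq \la^{*} = \ls$; likewise $\alpha_{_+}^{*} \subseteq \ls$. Therefore $\les \cup \alpha_{_+}^{*} \subseteq \ls$, and since $\ls$ is transitive, taking transitive closure preserves the inclusion.

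Next I would invoke the ingredients already available: by Lemma~\ref{l:str-lam pro} the relation $\les$ is strongly regular on both $(R,+)$ and $(R,\cdot)$, and by Corollary~\ref{c:st regular alph plus} the relation $\alpha_{_+}^{*}$ is strongly regular on both $(R,+)$ and $(R,\cdot)$. Applying Proposition~\ref{p:union} to the semi-hypergroup $(R,+)$ with the strongly regular relations $\les$ and $\alpha_{_+}^{*}$ yields that $(\les \cup \alpha_{_+}^{*})^{*}$ is strongly regular on $(R,+)$; applying it again to the semi-hypergroup $(R,\cdot)$ yields the same on $(R,\cdot)$. Combined with the identity above, this delivers the proposition.

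The argument is essentially bookkeeping, and I do not expect any real obstacle: the nontrivial work has already been absorbed into Lemma~\ref{l:str-lam pro} (strong regularity of $\les$, handled by the careful index-padding constructions in $\r{}{e}$) and Proposition~\ref{p:union} (closure of strong regularity under transitive closure of unions). The only mild point to check is that the two hyperstructures underlying $R$ are indeed semi-hypergroups so that Proposition~\ref{p:union} applies in both cases, which follows at once from the definition of a hyperring.
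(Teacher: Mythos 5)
Your proof is correct and follows exactly the route the paper takes: the paper's proof is a one-line citation of Proposition~\ref{p:union}, Corollary~\ref{c:st regular alph plus} and Lemma~\ref{l:str-lam pro}, and your identity $\ls = (\les \cup \alpha_{_+}^{*})^{*}$ is precisely the bookkeeping needed to make that citation rigorous. No gaps.
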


\begin{proof}

 The proof follows from Proposition~\ref{p:union}, Corollary~\ref{c:st regular alph plus} and Lemma~\ref{l:str-lam pro}.
\end{proof}

\begin{theorem}\label{t:quotient ring identity}

 The quotient $R / \ls$ is a ring with identity $\ls(e)$.
\end{theorem}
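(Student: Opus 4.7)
The plan is to define $\oplus$ and $\otimes$ on $R/\ls$ by picking representatives in the relevant hyperproducts, then verify the ring axioms, the novel content being the identification of $\ls(e)$ as the multiplicative identity. Since Proposition~\ref{p:ring} asserts that $\ls$ is strongly regular on both $(R,+)$ and $(R,\cdot)$, the assignments $\ls(a)\oplus\ls(b):=\ls(c)$ for any $c\in a+b$ and $\ls(a)\otimes\ls(b):=\ls(c)$ for any $c\in a\cdot b$ are well defined.

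For the abelian group structure on $(R/\ls,\oplus)$, strong regularity of $\ls$ together with the hypergroup structure of $(R,+)$ yields a group in the quotient by the standard argument. Commutativity of $\oplus$ is inherited from the inclusion $\alpha_{+}\subseteq\la\subseteq\ls$: the witness used for commutativity in the proof that $R/\au{*}$ is a commutative ring (namely $m=2$, $k_1=k_2=1$, $x_{11}=x_1$, $x_{21}=x_2$, $\sigma=(1\ 2)\in S_2$) shows that any $u\in x_1+x_2$ and $v\in x_2+x_1$ satisfy $u\,\alpha_{+}\,v$, hence $u\,\ls\,v$. Associativity of $\otimes$ and distributivity then descend directly from the corresponding hyperring axioms.

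The central novelty is that $\ls(e)$ is a two-sided identity for $\otimes$. Fix $a\in R$ and set $m=1$, $k_1=1$, $x_{11}=a$, so that $\s{1}\pxx{k_1}i = a$. To build a partner term equal to $e\cdot a$ I take $k'_1=2$ and invoke condition $\mathfrak{P_{e}}$ with index set $\{i_1\}=\{1\}$ and parameters $p_{i_1}=l_{i_1}=1$; then $k'_1 = k_1 + (l_{i_1}-p_{i_1}+1) = 2$, and the prescription~\eqref{eq:p1} forces $y_{11}=e$ and $y_{12}=x_{1,(2+1-1-1)}=x_{11}=a$. Hence $(a, e\cdot a)\in\r{1}{e}$, so for every $b\in e\cdot a$ one has $a\,\le\,b$, and therefore $\ls(e)\otimes\ls(a) = \ls(b) = \ls(a)$. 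The symmetric choice $p_{i_1}=l_{i_1}=2$ yields $y_{11}=a$, $y_{12}=e$, giving $(a,a\cdot e)\in\r{1}{e}$ and $\ls(a)\otimes\ls(e)=\ls(a)$.

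The only real obstacle is keeping the index conventions of condition $\mathfrak{P_{e}}$ straight when inserting a single $e$ into a length-one product; once this bookkeeping is pinned down, every remaining ring axiom follows either from well-definedness of the operations or from the inclusion $\alpha_{+}\subseteq\ls$ together with the results already established for $\au{*}$.
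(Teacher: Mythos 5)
Your proposal is correct and follows essentially the same route as the paper: well-definedness of $\oplus$ and $\otimes$ comes from the strong regularity established in Proposition~\ref{p:ring}, and the identity property of $\ls(e)$ comes from exhibiting $(x,xe)$ and $(x,ex)$ as elements of $\r{1}{e}$ via condition $\mathfrak{P_{e}}$, so that $\ls(xe)=\ls(ex)=\ls(x)$. Your version merely makes the index bookkeeping in $\mathfrak{P_{e}}$ and the commutativity of $\oplus$ (via $\alpha_{_+}\subseteq\la$) explicit, and replaces the paper's chain-of-representatives argument for $\ls(x)\otimes\ls(e)$ by a direct appeal to well-definedness, which is a harmless streamlining.
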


\begin{proof}

 By Proposition~\ref{p:ring} we conclude that $R / \ls$ is a ring with the following operations:
 $$\ls(a) \oplus \ls(b) = \ls(c), \hs1\text{ where}\hs1 c\in \ls(a) + \ls(b);$$
 $$\ls(a) \otimes \ls(b) = \ls(d), \hs1\text{ where}\hs1 d\in \ls(a) \cdot \ls(b).$$

  We prove that $\ls(e)$ is the identity of the ring $R / \ls$. By Definition~\ref{d:id}, for all $x\in R$ we have $(x , xe)$ and $(x , ex)$ are in $\r{1}e$. So, for all $y\in xe \cup ex$, $x\ \le \ y$ and hence $\ls(x) = \ls(y)$. Now suppose that $z\in \ls(x) \cdot \ls(e)$, so there exist $a\in \ls(x)$ and $b\in \ls(e)$ such that $z\in a b$. Thus we have
  $$a = x_1\ \la \ x_2\ \la \ ...\ \la \ x_n = x \hs1\text{ and }\hs1 b = y_1\ \la \ y_2\ \la \ ... \ \la \ y_m = e.$$
  Therefore
 $$a b = x_1 b\ \overline{\overline{\la}} \ x_2 b\ \overline{\overline{\la}} \ ...\ \overline{\overline{\la}} \ x_n b = x b \hs1\text{ and }\hs1 x b = x y_1\ \overline{\overline{\la}} \ x y_2\ \overline{\overline{\la}} \ ... \ \overline{\overline{\la}} \ x y_m = x e$$
 and so, for all $y\in x e$ we have $z \ \la \ y$. Thus $\ls(z) = \ls(y) = \ls(x)$ and hence $\ls(x) \otimes \ls(e) = \ls(x)$. Similarly we can prove $\ls(e) \otimes \ls(x) = \ls(x)$. Hence $\ls(e)$ is the identity element of the ring $R / \ls$.
\end{proof}

\begin{theorem}

The relation $\ls$ is the smallest equivalence relation such that
the quotient $R / \ls$ is a  ring with identity,
$\ls(e)$.
\end{theorem}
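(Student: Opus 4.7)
The existence of the ring structure with identity $\ls(e)$ is already Theorem~\ref{t:quotient ring identity}, so the content of the statement is minimality. The plan is: let $\rho$ be an arbitrary equivalence relation on $R$ such that $R/\rho$ carries a ring structure with identity $\rho(e)$, and show $\ls\su\rho$. For the inherited operations on $R/\rho$ to be well defined, $\rho$ is automatically strongly regular on both $(R,+)$ and $(R,\cdot)$. Because $\ls$ is the transitive closure of $\la=\le\cup\alpha_{_+}$ and $\rho$ is transitive, it suffices to verify the two containments $\alpha_{_+}\su\rho$ and $\le\su\rho$ separately.

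For $\alpha_{_+}\su\rho$, I would exploit that the additive group of any ring is abelian. Given $x\ \alpha_{_+}\ y$, unpack the defining data $n$, $(k_i)$, $\sigma\in S_n$, $x_{ij}$ with $x\in\s n\pxx{k_i}i$ and $y\in\s n A_{\sigma(i)}$, where $A_i=\prod_{j=1}^{k_i}x_{ij}$. Strong regularity of $\rho$ collapses each $A_i$ into a single $\rho$-class, which I will denote $\rho(A_i)$; then
\[
\rho(x)=\rho(A_1)\oplus\cdots\oplus\rho(A_n)=\rho(A_{\sigma(1)})\oplus\cdots\oplus\rho(A_{\sigma(n)})=\rho(y),
\]
the middle equality being commutativity of $\oplus$ in the ring $R/\rho$.

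For $\le\su\rho$ I would argue directly from Definitions~\ref{d:cond-p}--\ref{d:id}. Assume $x\,(\le)_m\,y$, witnessed by some $(A,B)\in\r{m}e$ with $A=\s m\pxx{k_i}i$, $B=\s m\pys t{k'_i}t$, and condition $\mathfrak{P_e}$ holding for the pair. The content of $\mathfrak{P_e}$ is that $B$ coincides with $A$ outside finitely many rows $i_1,\ldots,i_d$, and in each such row $i_j$ the new inner product is obtained from the old one by inserting a block of $l_{i_j}-p_{i_j}+1$ copies of $e$ between positions $p_{i_j}$ and $l_{i_j}$, while the prefix and suffix are copied verbatim. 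Because $\rho(e)$ is the multiplicative identity of $R/\rho$, each inserted factor collapses in the quotient; strong regularity of $\rho$ on $(R,\cdot)$ then gives
\[
\rho\Bigl(\prod_{t=1}^{k'_{i_j}}y_{i_j t}\Bigr)=\rho\Bigl(\prod_{t=1}^{k_{i_j}}x_{i_j t}\Bigr)
\]
for every $j$, and strong regularity on $(R,+)$ yields $\rho(A)=\rho(B)$ in $R/\rho$, so from $x\in A$, $y\in B$ we conclude $\rho(x)=\rho(y)$. Combining the two containments gives $\la\su\rho$, and transitivity of $\rho$ upgrades this to $\ls\su\rho$. The only really delicate step I anticipate is the bookkeeping inside formula~\eqref{eq:p1}: one has to verify that the three branches $1\lz t<p_{i_j}$, $p_{i_j}\lz t\lz l_{i_j}$, $l_{i_j}<t\lz k'_{i_j}$ really do encode the prefix of $x$-entries, the block of $e$'s, and the reindexed suffix of $x$-entries respectively, so that projection to $R/\rho$ introduces no factors beyond identities.
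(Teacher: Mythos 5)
Your proposal is correct and follows essentially the same route as the paper: reduce minimality to showing $\la=\alpha_{_+}\cup\le$ is contained in any competing equivalence $\rho$, handle $\alpha_{_+}$ via commutativity of $\oplus$ in $R/\rho$ and $\le$ via the fact that the inserted factors $e$ project to the multiplicative identity $\rho(e)$, then invoke transitivity of $\rho$ to conclude $\ls\su\rho$. Your explicit remark that $\rho$ must be strongly regular for the quotient operations to be well defined is a point the paper leaves implicit, but the argument is otherwise the same.
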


 \begin{proof}
 Let $\mu$ be an equivalence relation on $R$ such that $R / \mu$ is a  ring with identity $\mu(e)=\ls(e)$ and let $\xymatrix{\phi: R \ar[r] & R / \mu}$ be canonical projection. Suppose that $x\ \la \ y$. Thus we have two cases.

 {\bf case 1.} $x\ \alpha_{+} \ y$, so there exists $\sigma \in
 S_n$ such that
 $$x\in  \s n \pxx {k_i} i \hs2\text { and }\hs2 y\in \s n \pxx {k_{\sigma(i)}} {\sigma(i)}.$$
So $\phi(x) = \bigoplus\limits_{i = 1}^n\Big{(}\bigotimes\limits_{j
= 1}^{k_i}\mu(x_{_{ij}})\Big{)}$ and $\phi(y) = \bigoplus\limits_{i
= 1}^n\Big{(}\bigotimes\limits_{j =
1}^{k_{_{\sigma(i)}}}\mu(x_{_{\sigma(i)j}})\Big{)}$. By
commutativity of the group $(R / \mu , \bigoplus)$, it follows that $\phi(x) =
\phi(y)$ and hence $x \ \mu \ y$.

 {\bf case 2.} $x\ \le \ y$, so there exists $(A , B)\in \r{}e$ such that $x\in A$ and $y\in B$. Thus there exists $m \bz 1$ such that
 $A = \s m \ \pxx {k_i} i$, $B = \s m \ \pys t {k'_i} t$ and the pair $\big{(}\s m \ \pxx {k_i} i , \s m \ \pys t {k'_i} t\big{)}$ satisfying in the condition $\mathfrak{P_{e}}$. If $(A , B)$ satisfies in part (1) of Definition~\ref{d:cond-p}, then $x \ \mu \ y$. If $(A , B)$ satisfies in part (2) of Definition~\ref{d:cond-p}, then
 we have
$$\xymatrix{\bigotimes\limits_{j = 1}^{k_1}\mu(x_{1 j}) \ar@{=}[d] \bigoplus \ar @{}[r] |{...}  &  \bigoplus \bigotimes\limits_{j = 1}^{k_{i_1}}\mu(x_{i_1  j})  \ar @{}[r] |{...}  & \bigoplus \bigotimes\limits_{j = 1}^{k_{i_d}}\mu(x_{i_d j}) \bigoplus \ar @{}[r] |{...}  &\bigoplus \bigotimes\limits_{j = 1}^{k_m}\mu(x_{m j})\ar@{=}[d]\\
\bigotimes\limits_{t = 1}^{k'_1}\mu(y_{1 t}) \bigoplus \ar @{}[r] |{...}  &  \bigoplus \bigotimes\limits_{t = 1}^{k'_{i_1}}\mu(y_{i_1 t})  \ar @{}[r] |{...}  & \bigoplus \bigotimes\limits_{t = 1}^{k'_{i_d}}\mu(y_{i_d t}) \bigoplus \ar @{}[r] |{...}  &\bigoplus \bigotimes\limits_{t = 1}^{k'_m}\mu(y_{m t}).}$$
 By Definition~\ref{d:cond-p} for each $1 \lz n \lz d$, we have $\bigotimes\limits_{j = 1}^{k_{i_n}}\mu(x_{i_n j}) = \bigotimes\limits_{t = 1}^{k'_{i_n}}\mu(y_{i_n t})$. Therefore $\phi(x) = \phi(y)$ and hence $x \ \mu \ y$. Thus $ x\ \la \ y$ implies that $x \ \mu \ y$. Finally, let $x \ \ls \ y$. Since $\mu$ is transitively closed, we obtain
$$x \in \ls(y) \Rightarrow x \in \mu(y).$$
Therefore $\ls \su \mu$.

 \end{proof}

\section{Transitivity conditions of $\la$}

 In this section, we state the conditions that are equivalent to the transitivity
of the relation $\la$.
\begin{definition}\label{d:part}

 Let $M$ be a non-empty subset of hyperring $R$. We say that $M$ is a $\la$-part if

 (P1) for every $n\in \mathbb{N}$, $i = 1 , ... , n$, $\forall k_i\in \mathbb{N}$, $\forall \sigma\in S_n$, we have
 $$\s n \pxx {k_i} i \bigcap M \neq \em \Longrightarrow  \s n \pxx {k_{\sigma(i)}} {\sigma(i)} \su M ,$$

 (P2) for every $m\in \mathbb{N}$, $i = 1 , ... , m$, $\forall k_i\in \mathbb{N}$, we have

$$\s m \pxx {k_i} i \bigcap M \neq \em \Longrightarrow  \s m \ \pys t {k'_i} t \su M,$$
whenever the pair $\big{(}\s m \ \pxx {k_i} i , \s m \ \pys t {k'_i}
t\big{)}$ satisfying in the condition $\mathfrak{P_{e}}$.
\end{definition}

\begin{proposition}\label{p:part}
 Let $M$ be a non-empty subset of hyperring $R$. The following conditions are equivalent:

 (1) $M$ is a $\la$-part of $R$;

 (2) $x\in M,$ $x \la y \Rightarrow y\in M$;

 (3) $x\in M,$ $x \ls y \Rightarrow y\in M$.
\end{proposition}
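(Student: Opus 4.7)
The plan is to prove the cyclic chain of implications $(1) \Rightarrow (2) \Rightarrow (3) \Rightarrow (1)$. The conceptual content is that (P1) and (P2) are exactly the single-step closure conditions corresponding to one $\alpha_{+}$-move and one $\le$-move respectively; passing to the transitive closure $\ls$ on one side, or iterating (2) on the other, are equivalent ways of propagating membership in $M$ along chains.

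For $(1) \Rightarrow (2)$, given $x\in M$ and $x\, \la\, y$, I split on the decomposition $\la = \le \cup \alpha_{+}$. If $x\, \alpha_{+} \, y$, then witnesses $n$, $(k_i)$, $(x_{ij})$, $\sigma\in S_n$ place $x\in \s n \pxx{k_i}i$ and $y\in \s n \pxx{k_{\sigma(i)}}{\sigma(i)}$, so (P1) immediately yields $y\in M$. If $x\, \le\, y$, then there exists $(A,B)\in \r{m}e$ with $x\in A$ and $y\in B$; since $\r{m}e$ is defined symmetrically from pairs satisfying $\mathfrak{P_{e}}$, I apply (P2) to the orientation of the unordered pair that carries the explicit $\mathfrak{P_{e}}$-witness, which gives the appropriate inclusion and in particular $y\in M$.

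For $(2) \Rightarrow (3)$, I unroll the definition of $\ls$: given $x\, \ls\, y$, pick a chain $x=z_0\, \la\, z_1\, \la\, \cdots\, \la\, z_n=y$. Starting from $x=z_0\in M$, an $n$-step induction using (2) propagates membership along the chain to conclude $y=z_n\in M$.

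For $(3) \Rightarrow (1)$, I verify (P1) and (P2) directly from the hypothesis. For (P1), pick $x\in \s n \pxx{k_i}i \cap M$; then for any $y\in \s n \pxx{k_{\sigma(i)}}{\sigma(i)}$ the given data are exactly witnesses for $x\, \alpha_{+}\, y$, hence $x\, \la\, y$, hence $x\, \ls\, y$, and (3) forces $y\in M$. The verification of (P2) is formally identical, using a pair $(A,B)\in \r{m}e$ in place of the $\alpha_{+}$-witness. The only mildly delicate point throughout is the $x\, \le\, y$ case of $(1) \Rightarrow (2)$: one must explicitly appeal to the symmetrised definition of $\r{m}e$ so that the asymmetric statement of (P2) covers a single $\le$-step regardless of which side of the pair carries the $M$-element.
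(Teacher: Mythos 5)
Your proposal is correct and follows essentially the same route as the paper: the cyclic chain $(1)\Rightarrow(2)\Rightarrow(3)\Rightarrow(1)$, with the case split on $\la = \le \cup \alpha_{+}$ in the first implication, an $n$-step induction along a $\la$-chain for the second, and a direct verification of (P1)/(P2) via $\alpha_{+}$- and $\le$-witnesses for the third. Your explicit remark that the symmetrised definition of $\Re_m^e$ is needed so that (P2) covers a $\le$-step regardless of which component contains the element of $M$ is a point the paper's own proof passes over silently, and is a welcome clarification.
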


\begin{proof}

 (1 $\Longrightarrow$ 2): If $(x , y) \in R^2$ is a pair such that $x\in M$ and $x \la y$, then we have two cases.

 {\bf case 1}: $x \alpha_+ y$, then
 there exists $n\in \mathbb{N}$ and there exists $\sigma\in S_n$ such that $x\in \s n \pxx {k_i} i$ and $y\in \s n \pxx {k_{\sigma(i)}} {\sigma(i)}$. Since $M$ is a $\la$-part, by Definition~\ref{d:part}(P1), we have $\s n \pxx {k_i} {\sigma(i)}\su M$ and $y\in M$.

 {\bf case 2}: $x \le y$.

Then there exists the pair $\big{(}\s m \ \pxx {k_i} i , \s m \ \pys
t {k'_i} t\big{)}$ satisfying in the condition $\mathfrak{P_{e}}$
such that $x\in \s m \ \pxx {k_i} i$
 and $y\in \s m \ \pys t {k'_i} t$. Since $M$ is a $\la$-part, by Definition~\ref{d:part}(P2), we have $\s m \ \pys t {k'_i} t \su M$ and $y\in M$.

 (2 $\Longrightarrow$ 3): Let $x\in M$ and $x \ls y$ be given. Obviously, there exist $m\in \mathbb{N}$ and $(x = w_0, w_1, ... , w_{m - 1} , w_m = y)\in R^{m + 1}$ such that $x = w_0 \la w_1 \la ... \la w_{m - 1} \la w_m = y$. Since $x\in M$ we obtain $y\in M$, by applying (2) $m$ times.

(3 $\Longrightarrow$ 1): First let $\s n \pxx {k_i} i \bigcap M
\neq\em$, and $x\in \s n \pxx {k_i} i \bigcap M$. For every $\sigma
\in S_n$ and for every  $y\in \s n \pxx {k_{\sigma(i)}} {\sigma(i)}$
we have $x \ \alpha_+  \ y$ and hence $x \ \la  \ y$. Thus $x\in M$ and $x
\ls y$ and so by (3) we obtain $y\in M$,
  where $\s n \pxx {k_{\sigma(i)}} {\sigma(i)} \su M$. Now let $\s m \pxx {k_i} i \bigcap M \neq \em$, and $x \in \s m \pxx {k_i} i \bigcap M$.
  For every $y\in \s m \ \pys t {k'_i} t$ we have $x \le y$ and hence $x \ \la \ y$. Similarly by the above we have $y\in M$ and the proof is complete.
\end{proof}

 Before proving the next theorem, we introduce the following notations.

 \begin{notation}

  For every element $x$ of a hyperring $R$, set:

  (N1) $T_n(x) = \big{\{}\s n \pxx {k_i} i \mid x \in \s n \pxx {k_i} i\big{\}};$

  (N2) $P_n^{\alpha_+}(x) = \bigcup\limits_{n \bz 1} \big{\{} \s n \pxx {k_{\sigma(i)}} {\sigma(i)} \mid \sigma\in S_n, \s n \pxx {k_i} i\in T_n(x)\big{\}};$

  (N3) $P_m^{\le}(x) = \bigcup\limits_{m \bz 1} \big{\{} \s m \ \pys t {k'_i} t \mid \s m \pxx {k_i} i\in T_m(x) \text{ and the pair} $
   $$\hs{-4}\big{(}\s m \ \pxx {k_i} i , \s m \ \pys t {k'_i} t\big{)} \text{ satisfying in the condition } \mathfrak{P_{e}}\big{\}};$$

  (N4) $P(x)  = [\bigcup\limits_{n \bz 1}P_n^{\alpha_+}(x)] \ \bigcup \ \ [ \bigcup \limits_{m \bz 1}P_m^{\le}(x)].$
 \end{notation}

 From the above notations and definitions, we obtain:

 \begin{lemma}\label{l:P}

  For every $x\in R$, $P(x) = \{y\in R \mid x \la y \}$.
 \end{lemma}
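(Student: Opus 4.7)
The plan is to prove $P(x)=\{y\in R\mid x\,\la\,y\}$ by a direct unwinding of the notation, reducing it to the two ``atomic'' cases $\alpha_{_+}$ and $\le$. Since $\la = \le \cup \alpha_{_+}$ and $P(x)=\bigl[\bigcup_{n\bz 1}P_n^{\alpha_{+}}(x)\bigr]\cup \bigl[\bigcup_{m\bz 1}P_m^{\le}(x)\bigr]$ by (N4), it is enough to establish the two equalities
$\bigcup_{n}P_n^{\alpha_{+}}(x)=\{y: x\,\alpha_{_+}\,y\}$ and $\bigcup_{m}P_m^{\le}(x)=\{y: x\,\le\,y\}$, and then glue them.

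For the $\alpha_{_+}$ half: unfolding (N2), the statement $y\in P_n^{\alpha_{+}}(x)$ says exactly that there exist $\sigma\in S_n$ and $\s n \pxx{k_i}i\in T_n(x)$ with $y\in \s n \pxx{k_{\sigma(i)}}{\sigma(i)}$. Since $T_n(x)$ was defined as the collection of $n$-term sums of products containing $x$, this is verbatim the explicit form of $\alpha_{_+}$ recalled after Theorem~\ref{t:ring commutative}. Hence $y\in \bigcup_{n}P_n^{\alpha_{+}}(x)$ if and only if $x\,\alpha_{_+}\,y$.

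For the $\le$ half: unwinding (N3), $y\in P_m^{\le}(x)$ means that there is a pair $\bigl(\s m \pxx{k_i}i,\ \s m \pys t {k'_i} t\bigr)$ satisfying the condition $\mathfrak{P_{e}}$, with $x$ in the first sum and $y$ in the second. By Definition~\ref{d:id} this pair is an element of $\r m e$, so $x\,(\le)_m\,y$ and therefore $x\,\le\,y$. Conversely, if $x\,\le\,y$, there is some $m$ and some $(A,B)\in \r m e$ with $x\in A$ and $y\in B$; depending on which of the two clauses in Definition~\ref{d:id} places $(A,B)$ into $\r m e$, either the witnessing $\mathfrak{P_{e}}$-pair has $x$ already on the left and $y$ on the right (so $y\in P_m^{\le}(x)$ directly), or one has to relabel via the symmetric counterpart provided by Definition~\ref{d:id} to obtain a $\mathfrak{P_{e}}$-representation of the form demanded by (N3).

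The one nontrivial point is exactly that last bit of bookkeeping: the condition $\mathfrak{P_{e}}$ is stated asymmetrically (it lets the second sum be obtained from the first by inserting occurrences of $e$), while $\r m e$ is explicitly symmetrized. One must therefore argue that no matter which orientation of a pair in $\r m e$ witnesses $x\,\le\,y$, the element $y$ can still be exhibited inside $P_m^{\le}(x)$ after swapping the roles of the two sums as permitted by Definition~\ref{d:id}. Once this is in hand, taking unions over $m$ and $n$ and applying (N4) delivers $P(x)=\{y\in R\mid x\,\la\,y\}$.
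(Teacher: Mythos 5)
Your argument is, in substance, the paper's own proof: the paper establishes the lemma by exactly the two chains of equivalences you describe, unwinding (N2)--(N4) against the explicit descriptions of $\alpha_{_+}$ and $\le$ and then taking the union in (N4). The only place you go beyond the paper is the orientation issue you flag at the end, and you are right that it is a real subtlety: $\r{m}e$ is symmetrized (it contains both $(A,B)$ and $(B,A)$), whereas the condition $\mathfrak{P_{e}}$ and the set $P_m^{\le}(x)$ in (N3) are not, so a witness of $x\ \le\ y$ may place $x$ in the $e$-inflated sum $B$ and $y$ in the shorter sum $A$ --- a configuration that (N3) does not literally cover. The paper's proof silently identifies $x\ \le\ y$ with the forward orientation only and never addresses this. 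However, the repair you sketch --- ``relabel via the symmetric counterpart provided by Definition~\ref{d:id}'' --- does not close the gap: swapping the two components of a pair in $\r{m}e$ does not produce a pair satisfying $\mathfrak{P_{e}}$ in the required direction, because $\mathfrak{P_{e}}$ only permits the second sum to be obtained from the first by \emph{inserting} occurrences of $e$, never by deleting them. So in the reversed case one still needs either a genuine argument that some representation of $x$ admits an $e$-insertion containing $y$ (which is not automatic), or a more generous reading of (N3) under which $P_m^{\le}(x)$ collects both components of every pair in $\r{m}e$ that meets $\{x\}$. In short: you have correctly isolated the one delicate point, but your proposal leaves it open --- exactly as the paper does.
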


\begin{proof} It is easy to see that $P(x) \su \{y\in R \mid x \la y \}$. For every pair $(x , y)$ of elements of $R$ we have:
 \begin{align*}
x \alpha_+ y
&\Leftrightarrow
\exists n \in \mathbb{N}, \exists \sigma\in S_n : x\in \s n \pxx {k_i} i \text{ and } y\in \s n \pxx {k_{\sigma(i)}} {\sigma(i)}\\
 &\Leftrightarrow
\exists n \in \mathbb{N}, y\in P_n^{\alpha_+}(x)\\
 &\Rightarrow
y\in P(x),
\end{align*}
and
\begin{align*}
x \le y
&\Leftrightarrow
\exists m\in \mathbb{N}, x\in \s m \ \pxx {k_i} i \text{ and } y\in \s m \ \pys t {k'_i} t \text{ such that the}\\
 &
 \text{ pair } \big{(}\s m \ \pxx {k_i} i , \s m \ \pys t {k'_i} t\big{)} \text{ satisfying in the condition } \mathfrak{P_{e}}\\
&\Leftrightarrow
\exists m\in \mathbb{N}, y\in P_m(x)\\
&\Rightarrow
y\in P(x).
\end{align*}
\end{proof}

\begin{lemma}\label{l:part}

 Suppose that $R$ is a hyperring and $M$ is a $\la$-part of $R$. If $x\in M$, then $P(x) \su M$.
\end{lemma}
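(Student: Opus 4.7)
The plan is to combine Lemma~\ref{l:P} with Proposition~\ref{p:part} directly; no fresh combinatorial work on sums of products is needed, since those details are already absorbed into the definitions of $P_n^{\alpha_+}(x)$ and $P_m^{\le}(x)$ and into the equivalent formulations of being a $\la$-part.

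Concretely, I would start by fixing an arbitrary $y \in P(x)$. By the definition (N4), $y$ lies in $P_n^{\alpha_+}(x)$ for some $n \geq 1$ or in $P_m^{\le}(x)$ for some $m \geq 1$. In either case, Lemma~\ref{l:P} tells us that $P(x) = \{y \in R \mid x \la y\}$, so we immediately obtain $x \la y$.

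Now I would invoke the hypothesis that $M$ is a $\la$-part together with Proposition~\ref{p:part}, specifically the implication $(1)\Rightarrow (2)$: if $M$ is a $\la$-part, then $x \in M$ and $x \la y$ imply $y \in M$. Since $x \in M$ by assumption and $x \la y$ by the previous step, we conclude $y \in M$. As $y \in P(x)$ was arbitrary, this gives $P(x) \subseteq M$, finishing the argument.

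The main (essentially only) obstacle would be a bookkeeping check that the cases in Lemma~\ref{l:P} cover everything in $P(x)$, but this is already handled in that lemma's proof, so the present lemma reduces to a one-line citation of the two preceding results.
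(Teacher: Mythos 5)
Your proposal is correct and follows essentially the same route as the paper: the paper's proof is just the citation ``follows from Lemma~\ref{l:P},'' and you have simply made explicit the remaining step, namely that $y\in P(x)$ gives $x\ \la\ y$ by Lemma~\ref{l:P} and then $y\in M$ by Proposition~\ref{p:part}. No gap; your version is, if anything, slightly more complete than the paper's one-line proof.
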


\begin{proof}

 The proof follows from  Lemma~\ref{l:P}.
\end{proof}

\begin{theorem}\label{t:tra}

 Suppose that $R$ is a hyperring. Then the following conditions are equivalent:

  (1) $\la$ is transitive;

  (2) for every $x\in R$, $\ls(x)$ = P(x);

  (3) for every $x\in R$, P(x) is a $\la$-part of $R$.
\end{theorem}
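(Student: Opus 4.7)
The plan is to run the cyclic chain $(1) \Rightarrow (2) \Rightarrow (3) \Rightarrow (1)$, leveraging Lemma~\ref{l:P} (which identifies $P(x)$ with $\{y \in R \mid x \la y\}$) together with the characterizations of $\la$-parts given in Proposition~\ref{p:part}. The key observation that makes each step short is that $\la$ is already reflexive and symmetric, so whenever transitivity holds, $\la$ itself is an equivalence relation and therefore coincides with its transitive closure $\ls$.

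For $(1) \Rightarrow (2)$: assume $\la$ is transitive. Then $\la$ is an equivalence relation on $R$, so $\ls = \la$. Consequently, for every $x \in R$,
\[
\ls(x) = \la(x) = \{y \in R \mid x \la y\} = P(x),
\]
where the last equality is Lemma~\ref{l:P}.

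For $(2) \Rightarrow (3)$: assume $\ls(x) = P(x)$ for every $x \in R$. Fix $x \in R$ and take $y \in P(x) = \ls(x)$ together with $y \ls z$. By the transitivity and symmetry of $\ls$ we get $x \ls z$, i.e. $z \in \ls(x) = P(x)$. Thus $P(x)$ satisfies condition (3) of Proposition~\ref{p:part}, and hence $P(x)$ is a $\la$-part of $R$.

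For $(3) \Rightarrow (1)$: assume $P(x)$ is a $\la$-part of $R$ for every $x \in R$, and suppose $x \la y$ and $y \la z$. From $x \la y$ and Lemma~\ref{l:P} we obtain $y \in P(x)$. Since $P(x)$ is a $\la$-part, the implication in Proposition~\ref{p:part}(2) applied to $y \in P(x)$ and $y \la z$ gives $z \in P(x)$, which by Lemma~\ref{l:P} means $x \la z$. Therefore $\la$ is transitive.

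There is no real obstacle here: the three conditions are tied together once one notices that $P(x)$ is by construction the $\la$-neighbourhood of $x$, and that a reflexive and symmetric relation is an equivalence relation precisely when it is transitive. The only point that requires a moment's care is in $(2) \Rightarrow (3)$, where one must invoke the \emph{$\ls$-level} characterization in Proposition~\ref{p:part}(3) (rather than the $\la$-level one) in order to match the hypothesis $\ls(x) = P(x)$ directly.
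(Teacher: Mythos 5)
Your proposal is correct and follows essentially the same route as the paper: the same cyclic implication $(1)\Rightarrow(2)\Rightarrow(3)\Rightarrow(1)$, resting on Lemma~\ref{l:P} and Proposition~\ref{p:part}. The only cosmetic difference is in $(3)\Rightarrow(1)$, where you work inside $P(x)$ directly rather than passing through $P(y)\subseteq P(z)$ as the paper does; both variants are equally valid.
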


\begin{proof}

 (1 $\Longrightarrow$ 2): By Lemma~\ref{l:P}, for every pair $(x , y)$ of elements of $R$ we have:
$$y \in \ls(x) \Leftrightarrow x \ls y \Leftrightarrow x \la y \Leftrightarrow y\in P(x).$$

 (2 $\Longrightarrow$ 3): If $M$ is a non-empty subset of $R$, then $M$ is an $\la$-part of $R$ if and only if it is a union of equivalence classes modulo $\ls$. Particulary, every equivalence class modulo $\ls$ is a $\la$-part.

 (3 $\Longrightarrow$ 1): If $x \la y$ and $y \la z$, then by Lemma~\ref{l:P}, we have $x\in P(y)$ and $y\in P(z)$. Since $P(z)$ is a $\la$-part, by Lemma~\ref{l:part}, we obtain $P(y) \su P(z)$. Thus $x\in P(z)$ and hence by Lemma~\ref{l:P}, we have $x \la z$. Therefore $\la $ is transitive.
\end{proof}

\section{$\la$-strong hyperring and a characterization of a derived $\la$-strong hyperring}

 \begin{definition}

  Suppose that $(R , + , \cdot)$ is a hyperring and $\f$ is the canonical projection $\xymatrix{\f : R \ar[r] & R\slash\la^*}$. Define $K(R)$ and $D_e(R)$ as follows:
  $$K(R) := \f^{-1}(0_{_{R\slash\la^*}}) \hs2\text{ and }\hs2 D_{e}(R) := \f^{-1}(1_{_{R\slash\la^*}}).$$
  \end{definition}

\begin{proposition}\label{p:su}

 For a non-empty subset $M$ of a hyperring $R$ we have:

  (i) $K(R) + M  = M + K(R) = \f^{-1}(\f(M))$;

  (ii) $D_e(R) \cdot M \cup M \cdot D_e(R) \su \f^{-1}(\f(M))$.
\end{proposition}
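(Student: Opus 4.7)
The plan is to exploit that, by Proposition~\ref{p:ring}, the canonical projection $\f\colon R \to R/\ls$ is compatible with both hyperoperations, i.e.\ for all $a,b \in R$ and every $c \in a+b$ (respectively $c \in a\cdot b$) one has $\f(c) = \f(a) \oplus \f(b)$ (respectively $\f(c) = \f(a) \otimes \f(b)$). Consequently, $\f^{-1}(\f(M))$ absorbs any hyperoperation whose output does not change the $\f$-value, which is exactly the situation when one factor lies in $K(R)$ (additive zero) or $D_e(R)$ (multiplicative identity).

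For (i), the forward inclusion $K(R) + M \su \f^{-1}(\f(M))$ is immediate: for $k\in K(R)$, $m\in M$ and $s\in k+m$ we have $\f(s) = \f(k)\oplus\f(m) = 0_{R/\ls}\oplus\f(m) = \f(m)\in\f(M)$. For the converse, take $x\in \f^{-1}(\f(M))$ and pick $m\in M$ with $\f(x) = \f(m)$; applying the reproduction axiom of the hypergroup $(R,+)$ (namely $R = R + m$) yields $k\in R$ with $x\in k+m$, and then $\f(k)\oplus\f(m) = \f(x) = \f(m)$ forces $\f(k) = 0_{R/\ls}$, so $k\in K(R)$ and $x\in K(R)+M$. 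The identity $M + K(R) = \f^{-1}(\f(M))$ follows by the symmetric argument using $R = m + R$.

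For (ii), only the single inclusion is asserted, and it is the straightforward half of the above: for $d\in D_e(R)$, $m\in M$, $s\in d\cdot m$ we have $\f(s) = \f(d)\otimes\f(m) = 1_{R/\ls}\otimes\f(m) = \f(m)\in \f(M)$, hence $s\in\f^{-1}(\f(M))$; the case $M\cdot D_e(R)$ is analogous.

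The main obstacle I anticipate is the reverse inclusion in (i): since there is no subtraction available in a hypergroup, the "difference" element $k$ cannot be written down directly and must be produced via the reproduction axiom; only afterwards does cancellation in the quotient ring $R/\ls$ let us conclude $k\in K(R)$. The asymmetry of (ii) reflects the fact that $(R,\cdot)$ is only a semi-hypergroup, so no analogous reproduction is available and the reverse containment cannot be expected in general.
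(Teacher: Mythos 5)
Your proof is correct and follows essentially the same route as the paper: the easy inclusions come from compatibility of $\f$ with the quotient operations, and the reverse inclusion in (i) is obtained exactly as in the paper by invoking the reproduction axiom of $(R,+)$ to produce $k$ with $x\in k+m$ and then cancelling in the group $(R/\ls,\oplus)$ to get $\f(k)=0_{R/\ls}$. Your closing remark about why (ii) is only an inclusion (no reproduction in the semi-hypergroup $(R,\cdot)$) matches the paper, which upgrades (ii) to an equality only later under the $\la$-strong hypothesis.
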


\begin{proof}

 (i) It is easy to see that $K(R) + M \su \f^{-1}(\f(M))$. Now suppose that $x\in \f^{-1}(\f(M))$, so there exists $m\in M$, such that $\f(x) = \f(m)$ and hence $\ls(x) = \ls(m)$. Since $(R , +)$ is a hypergroup, there exists $k\in R$ such that $x\in k + m$ and so $x\in \ls(k) + \ls(m)$. Therefore $\ls(x) = \ls(k) \oplus \ls(m)$ and since $(R\slash\la^* , \oplus)$ is a group, we obtain $\ls(k) = 0_{_{R\slash\la^*}}$ and hence $k\in K(R)$. Thus $\f^{-1}(\f(M)) \su K(R) + M$.

 (ii) Since $\ls(e) = 1_{_{R\slash\la^*}}$, for every $x\in D_e(R) \cdot M$ we have $\ls(x) = \ls(m)$ for some $m\in M$ and hence $x\in \f^{-1}(\f(M))$. Thus $D_e(R) \cdot M  \su \f^{-1}(\f(M))$.
\end{proof}

\begin{definition}

 A hyperring $R$ is called $\la$-strong hyperring, whenever:

 (i) for all $x, y \in R$ if $x \ls y$, then $xe \cap ye \neq \em$ and $ex \cap ey \neq \em$;

 (ii) \{e\} is an invertible in the semi-hypergroup $(R , \cdot)$.
\end{definition}

\begin{proposition}\label{p:com}

 For a non-empty subset $M$ of a $\la$-strong hyperring $R$ we have:

  (i) $M \cdot D_e(R) = D_e(R) \cdot M = \f^{-1}(\f(M))$;

  (ii) $M$ is a $\la$-part if and only if $\f^{-1}(\f(M)) = M$.
\end{proposition}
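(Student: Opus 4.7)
For part (i) the inclusion $D_e(R) \cdot M \cup M \cdot D_e(R) \su \f^{-1}(\f(M))$ is already given by Proposition~\ref{p:su}(ii), so my plan is to establish the two reverse inclusions. Fix $x \in \f^{-1}(\f(M))$; there is some $m \in M$ with $\ls(x) = \ls(m)$, i.e.\ $x \ls m$. I would invoke the two hypotheses of $\la$-strong in tandem: condition (i) furnishes elements $z \in ex \cap em$ and $z' \in xe \cap me$, while condition (ii), the invertibility of $\{e\}$ in $(R,\cdot)$, lets me read the memberships $z \in e\cdot x$ and $z' \in x \cdot e$ backwards to obtain $x \in e\cdot z$ and $x \in z'\cdot e$.

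Substituting $z \in e\cdot m$ and $z' \in m\cdot e$ into these memberships and applying associativity of the semi-hypergroup $(R,\cdot)$ yields
\[ x \in e \cdot z \su e \cdot (e \cdot m) = (e \cdot e)\cdot m, \qquad x \in z' \cdot e \su (m \cdot e)\cdot e = m\cdot (e \cdot e). \]
Every $w \in e\cdot e$ satisfies $\ls(w) = \ls(e) \otimes \ls(e) = 1_{R/\ls}$, since $\ls(e)$ is the identity of the ring $R/\ls$ by Theorem~\ref{t:quotient ring identity}; hence $e \cdot e \su D_e(R)$, and therefore $x \in D_e(R) \cdot m \su D_e(R) \cdot M$ and $x \in m \cdot D_e(R) \su M \cdot D_e(R)$. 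This finishes (i).

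For (ii) I plan to route both implications through Proposition~\ref{p:part}(3). If $M$ is a $\la$-part and $y \in \f^{-1}(\f(M))$, then $\ls(m) = \ls(y)$ for some $m \in M$, so $m \ls y$, and Proposition~\ref{p:part}(3) forces $y \in M$; the inclusion $M \su \f^{-1}(\f(M))$ is automatic. Conversely, if $\f^{-1}(\f(M)) = M$ and $x \in M$, $x \ls y$, then $\f(y) = \f(x) \in \f(M)$, so $y \in \f^{-1}(\f(M)) = M$, and Proposition~\ref{p:part} again yields that $M$ is a $\la$-part. The main obstacle I foresee is the concrete realization step in (i): turning the abstract identity $\ls(x) = \ls(m)$ into a hyperproduct membership $x \in D_e(R)\cdot m$ uses both halves of the $\la$-strong hypothesis essentially, and without either one I see no way to produce the intermediate element $z$ and then reverse $z \in e\cdot x$ to place $x$ inside $e\cdot z$; part (ii), by contrast, is essentially bookkeeping on top of Proposition~\ref{p:part}.
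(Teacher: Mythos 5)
Your proposal is correct and follows essentially the same route as the paper's proof: for (i) it reduces to the reverse inclusion via Proposition~\ref{p:su}, produces the intermediate element $z$ from condition (i) of $\la$-strongness, reverses the membership using the invertibility of $\{e\}$ to land in $m\cdot(e\cdot e)\su M\cdot D_e(R)$ (and symmetrically), and for (ii) it is the same two-way application of Proposition~\ref{p:part}. The only difference is cosmetic: you explicitly justify $e\cdot e\su D_e(R)$ via $\ls(e)\otimes\ls(e)=1_{R/\ls}$, which the paper merely asserts.
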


\begin{proof}
 (i) By Proposition~\ref{p:su} it is enough to prove that $$\f^{-1}(\f(M))\su M \cdot D_e(R) \cap D_e(R) \cdot M.$$ For every $x\in \f^{-1}(\f(M))$, an element $m\in M$ exists such that $\f(x) = \f(m)$. Since $R$ is a $\la$-strong hyperring, $x e \cap m e \neq \em$. So there exists $z\in x e \cap m e$. Since $\{ e \}$ is invertible, we have $x \in z e$ and hence $x \in m e e $. Therefore $x \in M \cdot D_e(R)$ ,because $e e \su D_e(R)$. Similarly we can prove $\phi_e^{-1}(\phi_e(M)) \su D_e(R) \cdot M$.

 (ii) Suppose that $M$ is a $\la$-part and $x \in \phi_e^{-1}(\phi_e(M))$ is given. Thus there exists $m\in M$ such that $\f (x) = \f (m)$ and hence $m \ls x$, so by Proposition~\ref{p:part} we have $x\in M$. Therefore $\phi_e^{-1}(\phi_e(M)) \su M$. It is obvious that $M \su \phi_e^{-1}(\phi_e(M))$ and so $\phi_e^{-1}(\phi_e(M)) = M$. For the proof of the sufficiency suppose that $m \ls x$ and $m\in M$. Thus $\f (x) = \f (m) \in \f (M)$ and so $x \in \phi_e^{-1}(\f (M)) = M$. Therefore by Proposition~\ref{p:part} we have $M$ is an $\la$-part of $R$.
\end{proof}

\begin{theorem}
 If $R$ is a $\la$-strong hyperring, then $\la$ is transitive.
\end{theorem}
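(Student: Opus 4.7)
The plan is to prove transitivity by establishing the stronger statement $\ls = \la$, since by Theorem~\ref{t:tra} transitivity of $\la$ is equivalent to $\ls(x)=P(x)$, which in turn is equivalent to $\la$ and $\ls$ coinciding. The inclusion $\la \su \ls$ is automatic, so the whole content is $\ls \su \la$. In other words, I aim to show directly: whenever $x \ls y$ one already has $x \la y$ in a single step.

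The argument proceeds in three moves. First, take $x,y \in R$ with $x \ls y$ and apply condition (i) of the $\la$-strong hypothesis to extract an element $u \in xe \cap ye$. Second, apply condition (ii) -- invertibility of $\{e\}$ in the semi-hypergroup $(R,\cdot)$ -- to the memberships $u \in xe$ and $u \in ye$, obtaining $x \in ue$ and $y \in ue$ respectively. At this point $x$ and $y$ lie simultaneously in the single one-term, two-factor expression $A := ue$. Third, view $A$ as $\sum_{i=1}^{1}\prod_{j=1}^{2} z_{1j}$ with $z_{11}=u$, $z_{12}=e$, and take $B$ to be the very same expression; the pair $(A,B)$ satisfies clause (1) of $\mathfrak{P_{e}}$ trivially (equal indexing and equal entries), so $(A,A) \in \r{1}{e}$. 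Since $x,y \in A$, the definition of $(\le)_1$ yields $x\,(\le)_1\,y$, hence $x \le y$, hence $x \la y$, as required.

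I expect the only real obstacle to be the recognition, from a careful parsing of Definition~\ref{d:cond-p} and Definition~\ref{d:id}, that the ``identity'' pair $(A,A)$ legitimately belongs to $\r{m}{e}$ via clause (1) of $\mathfrak{P_{e}}$, so that any two elements sharing a common sum-of-products representation are automatically related by $\le$. Once this is accepted, properties (i) and (ii) of a $\la$-strong hyperring dovetail exactly: (i) produces the witness $u$ in the intersection $xe \cap ye$, and (ii) pulls $x$ and $y$ back into the common set $ue$, which is precisely the shape of expression needed to invoke the trivial pair. No case analysis over the second clause of $\mathfrak{P_{e}}$ is required, and the $\alpha_{_+}$ summand of $\la$ is never touched.
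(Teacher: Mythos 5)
Your proof is correct, and it is a leaner packaging of the same key mechanism the paper uses. Both arguments hinge on the identical two moves: condition (i) of $\la$-strongness applied to $x \ \ls \ y$ produces a witness $u\in xe\cap ye$, and invertibility of $\{e\}$ pulls $x$ and $y$ back through $e$. The differences are in the framing and in the final appeal to $\r{}{e}$. The paper routes the argument through Theorem~\ref{t:tra} and Proposition~\ref{p:com}, proving that $\phi_e^{-1}(\phi_e(P(x)))=P(x)$ so that $P(x)$ is a $\la$-part, and its concluding pair is $(zee,\,zeeee)$, which lies in $\r{}{e}$ via clause~(2) of $\mathfrak{P_{e}}$ (insertion of copies of $e$). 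You instead prove $\ls\su\la$ outright, which immediately gives $\la=\ls$ and hence transitivity, and your concluding pair is the degenerate one $(ue,\,ue)$ via clause~(1). Your reading of clause~(1) is legitimate: it puts every pair $(A,A)$ with $A$ a finite sum of finite products into $\r{m}{e}$, so any two elements of a common such set are $(\le)_m$-related; this is exactly the fact the paper itself relies on when it asserts that $\le$ is reflexive, and it amounts to observing that $\Gamma\su\le$. The net effect is that you bypass Propositions~\ref{p:su} and~\ref{p:com} and the $\la$-part characterization entirely, at the cost of not exhibiting the (mildly informative) fact that each $P(x)$ is a $\la$-part -- which in any case follows once $\la=\ls$ is known.
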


\begin{proof}
By Theorem~\ref{t:tra}, it is enough to show that for all $x\in R$, $P(x)$ is an $\la$-part of $H$. For this reason we prove that $\phi_e^{-1}(\f(P(x))) = P(x)$ and by Proposition~\ref{p:com} we have $P(x)$ is an $\la$-part of $R$.

 Suppose that $z\in \phi_e^{-1}(\phi_e(P(x)))$, so there exists $k\in P(x)$ such that $\phi_e(z) = \phi_e(k)$ and hence $\ls(z) = \ls(k)$. Since $k\in P(x)$ by Lemma~\ref{l:P} we have $x\la k$. Thus $\ls(k) = \ls(x)$ and so $\ls(z) = \ls(x)$. Since $R$ is a $\la$-strong hyperring, we have $x e \cap z e \neq \em$ and hence there exists $s \in x e \cap z e$. Therefore $x \in z e e$ and $z \in x e e$, because $\{ e \}$ is an invertible and so $z \in z e e e e$. Since $( z e e , z e e e e)\in \r {} e$, we have $x \la z$ and hence $z \in P(x)$. So we prove that $\phi_e^{-1}(\phi_e(P(x))) \su P(x)$ it is obvious $P(x) \su \phi_e^{-1}(\phi_e(P(x)))$. Therefore $\phi_e^{-1}(\phi_e(P(x))) = P(x)$ and the proof is complete.
\end{proof}

\section{The relation $\Lambda^*_{e}$ and $(\Lambda_{e})_n$-complete hyperrings  }

 \begin{definition}

 Let $R$ be a hyperring. Define relation $\Lambda_{e}$ as follows:
$$ \Lambda_{e}  = \le \ \cup \ \alpha.$$
\end{definition}

  Obviously, the relation $\Lambda_{e}$ is reflexive and symmetric. Let $\Lambda^*_{e}$ be the transitive closure of $\Lambda_{e}$.
 \begin{theorem}
        $\Lambda^*_{e}$ is a strongly regular relation both on $(R , +)$ and $(R , \cdot)$ and the quotient $R /\Lambda^*_{e}$ is a commutative ring with identity $\Lambda^*_{e}(e)$.
 \end{theorem}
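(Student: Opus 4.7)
The plan is to combine the strong-regularity results already in hand for $\les$ and $\alpha^*$; everything else then follows by repeating arguments already carried out in earlier theorems. The first step is to identify $\Lambda^*_{e}$ with $(\les \cup \alpha^*)^*$: the inclusion $\Lambda_e \subseteq \les \cup \alpha^*$ comes from $\le \subseteq \les$ and $\alpha \subseteq \alpha^*$, which gives $\Lambda^*_{e} \subseteq (\les \cup \alpha^*)^*$, while $\le \subseteq \Lambda_e$ and $\alpha \subseteq \Lambda_e$ together with the transitivity of $\Lambda^*_{e}$ give the reverse inclusion. Since Lemma~\ref{l:str-lam pro} and Theorem~\ref{t:ring commutative}(1) say that $\les$ and $\alpha^*$ are strongly regular on both $(R,+)$ and $(R,\cdot)$, Proposition~\ref{p:union} then yields the strong regularity of $\Lambda^*_{e}$ on both operations.

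With strong regularity in hand, the operations
$$\Lambda^*_{e}(a) \oplus \Lambda^*_{e}(b) = \Lambda^*_{e}(c), \ c \in \Lambda^*_{e}(a) + \Lambda^*_{e}(b),$$
$$\Lambda^*_{e}(a) \otimes \Lambda^*_{e}(b) = \Lambda^*_{e}(d), \ d \in \Lambda^*_{e}(a) \cdot \Lambda^*_{e}(b)$$
on $R/\Lambda^*_{e}$ are well defined, and associativity and distributivity lift from $R$ in the standard way. For commutativity of $\oplus$ I would rerun the witness used in the proof that $R/\au{*}$ is commutative: given $x \in x_1+x_2$ and $y \in x_2+x_1$, the choice $n=2$, $k_1=k_2=1$, $\sigma=(1\ 2)$ produces $x\,\alpha_+\,y$, and $\alpha_+ \subseteq \Lambda_e$ gives $x\,\Lambda^*_{e}\,y$. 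Commutativity of $\otimes$ is the parallel statement coming from $\alpha_{\times} \subseteq \Lambda_e$ with $n=1$, $k_1=2$, $\sigma_1=(1\ 2)$.

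Finally, for the identity $\Lambda^*_{e}(e)$ I would follow Theorem~\ref{t:quotient ring identity} almost verbatim. Definition~\ref{d:id} places the pairs $(x,xe)$ and $(x,ex)$ in $\r{1}e$, so $x\,\le\,y$ and hence $x\,\Lambda^*_{e}\,y$ for every $y \in xe \cup ex$. For $z \in \Lambda^*_{e}(x) \cdot \Lambda^*_{e}(e)$, writing $z \in ab$ with $a\,\Lambda^*_{e}\,x$ and $b\,\Lambda^*_{e}\,e$ and propagating the two $\Lambda_e$-chains through the product on either side by strong regularity yields $\Lambda^*_{e}(z) = \Lambda^*_{e}(xe) = \Lambda^*_{e}(x)$; the symmetric computation handles $\Lambda^*_{e}(e) \otimes \Lambda^*_{e}(x)$. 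No step is a genuine obstacle; the only piece of bookkeeping worth stating explicitly is the opening identification $\Lambda^*_{e} = (\les \cup \alpha^*)^*$, which is what makes Proposition~\ref{p:union} directly applicable.
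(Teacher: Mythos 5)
Your proposal is correct and follows essentially the same route as the paper, which simply cites Proposition~\ref{p:union}, Lemma~\ref{l:str-lam pro}, Theorem~\ref{t:quotient ring identity} and Theorem~\ref{t:ring commutative}. The one point you make explicit that the paper leaves implicit --- the identification $\Lambda^*_{e}=(\les\cup\alpha^*)^*$ needed so that Proposition~\ref{p:union} applies to strongly regular relations rather than to $\le$ and $\alpha$ themselves --- is a worthwhile clarification, but it does not change the argument.
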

 \begin{proof}
 The proof follows from Proposition~\ref{p:union}, Lemma~\ref{l:str-lam pro} and Theorem~\ref{t:quotient ring identity}. Notice that commutativity follows from Theorem~\ref{t:ring commutative}.
 \end{proof}

\begin{definition}
A hyperring $(R , + , .)$ is said to be unitary hyperring if relating to the multiplication, $(R , .)$ is a semi-hypergroup with scalar identity $e$. The element $e$ is called the unit of $R$.
\end{definition}

\begin{definition}
A hyperring  $(R , + , .)$ is said to be $(\Lambda_{e})_n$-complete if $\forall (k_1 , \cdots , k_n)\in \mathbb{N}^n$, $\forall (x_{i1} , \cdots x_{ik_i})\in R^{k_i}$, $i = 1 , \cdots n$, then
$$\Lambda_{e}\Big{(}\s n \pxx {k_i} i\Big{)} =  \s n \pys t {k'_i} t,$$
where $\Big{(} \s n \pxx {k_i} i , \s n\pys t {k'_i} t \Big{)} \in \r{n}e$.
\end{definition}

\begin{corollary}
If $R$ is a unitary hyperring with unit $e$, then $R$ is an $(\Lambda_{e})_n$-complete hyperring if and only if $R$ is an $n$-complete hyperring.
\end{corollary}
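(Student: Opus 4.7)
The approach is to exploit the scalar identity of a unitary hyperring to collapse both completeness conditions to a common statement about Vougiouklis's relation $\Gamma$. The key elementary observation is that in a unitary hyperring $e\cdot x=x\cdot e=\{x\}$ for every $x\in R$, so inserting any finite string of copies of $e$ into a product leaves its value in $R$ unchanged. Working through both clauses of condition $\mathfrak{P_e}$ in Definition~\ref{d:cond-p}, this yields $A=B$ as subsets of $R$ for every pair $(A,B)\in\r{n}e$. Consequently the defining equality of $(\Lambda_e)_n$-completeness collapses to $\Lambda_e(\s n \pxx{k_i} i)=\s n \pxx{k_i} i$, and the same reasoning forces $\le$ to coincide on $R\times R$ with Vougiouklis's relation $\Gamma$. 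Since $\Gamma$ is exactly the special case of $\alpha$ obtained by choosing all permutations equal to the identity, one has $\Gamma\subseteq\alpha$, and therefore $\Lambda_e=\le\cup\alpha=\Gamma\cup\alpha=\alpha$.

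The corollary thus reduces to showing that, writing $S=\s n \pxx{k_i} i$ for the generic $n$-term sum of products, $\alpha(S)=S$ if and only if $\Gamma(S)=S$. The implication $(\Lambda_e)_n$-complete $\Rightarrow$ $n$-complete is immediate, since $\Gamma\subseteq\alpha$ gives $\Gamma(S)\subseteq\alpha(S)=S$ while reflexivity of $\Gamma$ supplies the other inclusion.

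For the converse, suppose $R$ is $n$-complete. Iterating $\Gamma(S)=S$ yields $\Gamma^*(S)=S$. Because $R$ is unitary, the fundamental ring $R/\Gamma^*$ inherits a multiplicative unit $\Gamma^*(e)$ (as $e\cdot x=\{x\}$), so by the classical argument recalled in the introduction---that a ring with multiplicative unit has commutative addition---$R/\Gamma^*$ is a commutative ring. Theorem~\ref{t:ring commutative}(3) then forces $\alpha^*\subseteq\Gamma^*$, and combined with $\Gamma^*\subseteq\alpha^*$ (obtained from $\Gamma\subseteq\alpha$ by taking transitive closures) we conclude $\Gamma^*=\alpha^*$. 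Therefore $\alpha(S)\subseteq\alpha^*(S)=\Gamma^*(S)=S$, which together with reflexivity yields $\alpha(S)=S$, as required. The only delicate step I foresee is verifying that the unit-implies-commutativity computation from the introduction genuinely applies to the quotient; since $R/\Gamma^*$ is an honest ring rather than a hyperring, the four-line calculation goes through verbatim.
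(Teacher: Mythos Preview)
Your reduction is sound up through the forward implication: with a scalar identity every pair $(A,B)\in\r{n}e$ satisfies $A=B$, so $(\Lambda_e)_n$-completeness collapses to $\Lambda_e(S)=S$; moreover $\le=\Gamma\subseteq\alpha$ gives $\Lambda_e=\alpha$, and then $\Gamma(S)\subseteq\alpha(S)=S$ handles one direction. The gap is in the converse. To invoke Theorem~\ref{t:ring commutative}(3) you need $R/\Gamma^*$ to be a \emph{commutative} ring in the sense of that theorem, namely with commutative \emph{multiplication}---this is precisely what the inner permutations $\sigma_i$ in Definition~\ref{d:alpha} enforce on the quotient. The unit argument from the introduction only yields commutativity of \emph{addition} in $R/\Gamma^*$; it says nothing about $\Gamma^*(a)\odot\Gamma^*(b)$ versus $\Gamma^*(b)\odot\Gamma^*(a)$. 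Hence the inclusion $\alpha^*\subseteq\Gamma^*$ is not established.

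The gap is not merely cosmetic. Take $R=M_2(k)$, the $2\times 2$ matrices over a field, with its ordinary ring operations viewed as singleton hyperoperations and scalar identity $e=I$. Every finite sum of finite products is a singleton, so $\Gamma$ is the diagonal relation and $R$ is $n$-complete for every $n$. Yet for $a,b$ with $ab\neq ba$ one has $ba\in\alpha(\{ab\})$, whence $\Lambda_e(\{ab\})=\alpha(\{ab\})\neq\{ab\}$ and $R$ fails to be $(\Lambda_e)_1$-complete. Note that your own computation also undercuts the paper's one-line proof: the assertion $\Gamma=\Lambda_e$ would force $\alpha\subseteq\Gamma$, which the matrix example shows is false. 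The corollary as stated thus appears to require an additional hypothesis (for instance commutativity of $(R,\cdot)$) before either argument can go through.
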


\begin{proof}
Since $(R , .)$ is a semi-hypergroup wit scalar identity $e$, then we have $\Gamma = \Lambda_{e}$.
\end{proof}

\begin{proposition}
If $(R , + , \cdot)$ is a $(\Lambda_{e})_n$-complete hyperring, then $$(\Lambda_{e})_n = \Lambda_{e}.$$
\end{proposition}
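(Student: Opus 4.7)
The inclusion $(\Lambda_{e})_n \su \Lambda_{e}$ is immediate from $\Lambda_{e} = \bigcup_{m\bz 1}(\Lambda_{e})_m$, so the substance of the proposition is the reverse inclusion $\Lambda_{e} \su (\Lambda_{e})_n$. Given $(x,y)\in R^2$ with $x\,\Lambda_{e}\,y$, my plan is to produce a single level-$n$ sum containing both $x$ and $y$, and then read off an $\alpha_n$-witness with identity permutations.

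\textbf{Step 1 (embed $x$ into a level-$n$ sum).} Since $(R,+)$ is a hypergroup, iterated reproducibility supplies elements $x_{11},x_{21},\ldots,x_{n1}\in R$ such that $x\in x_{11}+x_{21}+\cdots+x_{n1}$ (pick $x_{11}$ arbitrary and $z_1$ with $x\in x_{11}+z_1$, then pick $x_{21}$ arbitrary and $z_2$ with $z_1\in x_{21}+z_2$, and continue, taking $x_{n1}:=z_{n-1}$). Setting $k_i:=1$ and $A:=\s n \pxx{k_i} i$, we obtain a level-$n$ sum of products with $x\in A$.

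\textbf{Step 2 ($A$ is a $\Lambda_{e}$-part).} Part~(1) of Definition~\ref{d:cond-p} applies trivially to give $(A,A)\in\r n e$ (take $k_i=k'_i$ and $x_{ij}=y_{ij}$ throughout). Feeding this identity pair into the $(\Lambda_{e})_n$-completeness hypothesis yields $\Lambda_{e}(A)=A$, so $A$ is $\Lambda_{e}$-closed.

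\textbf{Step 3 (witness at level $n$).} Since $x\in A$ and $x\,\Lambda_{e}\,y$, we have $y\in\Lambda_{e}(x)\su\Lambda_{e}(A)=A$. With $x$ and $y$ both in the level-$n$ sum $A$, I take $\sigma=\mathrm{id}\in S_n$ and $\sigma_i=\mathrm{id}\in S_{k_i}$ in Definition~\ref{d:alpha}; then $A_{\sigma(i)}=\prod_j x_{ij}$ and both $x,y\in\s n \pxx{k_i} i=\s n A_{\sigma(i)}$, so $x\,\alpha_n\,y$, and hence $x\,(\Lambda_{e})_n\,y$.

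\textbf{Main obstacle.} The only subtle point is extracting the right consequence from the definition of $(\Lambda_{e})_n$-completeness. What I need is the clean statement ``every level-$n$ sum is a $\Lambda_{e}$-part'', and this is obtained by instantiating the hypothesis at the identity pair supplied by part~(1) of condition $\mathfrak{P_{e}}$. Once this reformulation is in hand, embedding $x$ into a level-$n$ sum via additive reproducibility and recognizing the $\alpha_n$-witness with identity permutations is routine; notably, the level $m$ at which the witness of $x\,\Lambda_{e}\,y$ is originally given plays no role in the argument.
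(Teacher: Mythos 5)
Your proof is correct, and it shares the paper's central mechanism --- instantiating the $(\Lambda_{e})_n$-completeness hypothesis at the identity pair $(A,A)\in\r{n}{e}$ supplied by part (1) of condition $\mathfrak{P_{e}}$, so that a level-$n$ sum containing $x$ is $\Lambda_{e}$-closed and therefore captures $y$ --- but the way you produce that level-$n$ sum is genuinely different. The paper starts from a witness $(A,B)\in\r{m}{e}$ of $x\ \Lambda_{e}\ y$ and splits into cases: for $m\leq n$ it invokes Proposition~\ref{p:order}, and for $m>n$ it collapses the tail $\sum_{i=n}^{m}\prod_{j}x_{ij}$ into a single element $s$ so as to manufacture a level-$n$ sum still containing $x$. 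You instead discard the witness entirely and build a fresh sum $x\in x_{11}+\cdots+x_{n1}$ by iterated reproducibility of $(R,+)$. This buys two things: it eliminates the case analysis on $m$ versus $n$, and it repairs a gap in the paper's own argument, which opens with ``$x\ \Lambda_{e}\ y$, thus there exists $(A,B)\in\r{}{e}$ with $x\in A$, $y\in B$'' --- a step justified only when the relation holds via $\le$, silently ignoring the $\alpha$-half of $\Lambda_{e}=\le\cup\,\alpha$; your argument needs only $y\in\Lambda_{e}(x)\subseteq\Lambda_{e}(A)$ and so treats both halves uniformly. Two cosmetic remarks: the detour through $\alpha_n$ in your Step~3 is unnecessary, since $x,y\in A$ with $(A,A)\in\r{n}{e}$ already yields $x\ (\le)_n\ y$; and since the paper never formally defines $(\Lambda_{e})_n$, you should state explicitly that you read it as $(\le)_n\cup\,\alpha_n$, under which either route lands in $(\Lambda_{e})_n$.
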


\begin{proof}
Suppose that $x \ \Lambda_{e} \ y$ thus there exists $(A , B) \in  \r {} e$ such that $x\in A$ and $y\in B$. Without lose the generality suppose that
$$A = \s m \pxx {k_i} i \hs2 \text{ and } \hs2 B = \s m \pys t {k'_i} t. $$
If $A$ and $B$ satisfying in (1) of Definition~\ref{d:cond-p}, then the proof is obvious. Now let  $A$ and $B$ satisfying in (2) of Definition~\ref{d:cond-p}.
If $m \lz n$, then by Proposition~\ref{p:order} we have $(\le)_{m} \su (\le)_{n}$. If $m > n$ since $(R , +)$ is a hypergroup, so there exists $s\in R$ such that $s \in \sum\limits_{i = n}^{m} \pxx {k_i} i$. Put
$l_i :\stackrel{\text{\tiny def}}{=} \begin{cases}
k_i, & \text { if } 1 \lz i \lz n-1;\\
1,  & \text{ if } i = n.
\end{cases}$ and
$z_{i j} :\stackrel{\text{\tiny def}}{=} \begin{cases}
x_{i j}, & \text { if } 1 \lz i \lz n-1 \text{ and } 1 \lz j \lz l_i;\\
s,  & \text{ if } i = n
\end{cases}$.
Therefore $x \in \s n \pxz{l_i}$ and since $R$ is a $(\Lambda_{e})_n$-complete hyperring, $$\Lambda_{e} (x) \su \Lambda_{e} \Big{(}  \s n \pxz{l_i} \Big{)} = \s n \pyz {l'_i},$$ where $\Big{(}  \s n \pxz{l_i}  ,   \s n \pxz{l_i} \Big{)} \in \r{n} e$. By $x \ \Lambda_{e} \ y$ we have $y\in   \Lambda_{e}(x)$ and hence $x \ (\Lambda_{e})_n \ y$.
\end{proof}

\vs{3}

\end{document}